\title[CEP for quantum groups]
{The Connes embedding property for quantum group von Neumann algebras} 
\author {Michael Brannan}
\address{Michael Brannan,
Department of Mathematics,
University of Illinois at Urbana-Champaign, 376 Altgeld Hall, 1409 W. Green Street Urbana, IL 61801, USA}
\email{mbrannan@illinois.edu}
\author {Beno\^\i{}t Collins}
\address{Beno\^\i{}t Collins,
Department of Mathematics, Kyoto University,
D\'epartement de Math\'ematique et Statistique, Universit\'e d'Ottawa,
585 King Edward, Ottawa, ON, K1N6N5 Canada,
and
CNRS, Institut Camille Jordan Universit\'e  Lyon 1,
France}
\email{bcollins@uottawa.ca}
\author {Roland Vergnioux}
\address{Roland Vergnioux,
UFR Sciences, LMNO,
Universit\'e de Caen Basse-Normandie,
Esplanade de la Paix, CS 14032, 14032 CAEN cedex 5, France}
\email{roland.vergnioux@unicaen.fr}
\theoremstyle{plain}
\newtheorem{lemma}{Lemma}[section]
\newtheorem{theorem}[lemma]{Theorem}
\newtheorem{proposition}[lemma]{Proposition}
\newtheorem{corollary}[lemma]{Corollary}
\theoremstyle{definition}
\newtheorem{definition}{Definition}
\theoremstyle{remark}
\newtheorem{remark}{Remark}
\DeclareMathOperator{\tr}{tr} 
\renewcommand{\iff}{\textbf{iff} }
\newcommand{\tens}{\mathord{\otimes}} 
\newcommand{\hatfree}{\mathbin{\hat\ast}} 
\newcommand{\myop}[1]{\operatorname{#1}}
\newcommand{\Ker}{\myop{Ker}} 
\newcommand{\Irr}{\myop{Irr}}
\newcommand{\Span}{\myop{span}}
\newcommand{\Fix}{\myop{Fix}}
\newcommand{\Hom}{\myop{Hom}}
\newcommand{\Pol}{\myop{Pol}}
\newcommand{\CEP}{\myop{CEP}}
\newcommand{\N}{\mathbb N}
\newcommand{\C}{\mathbb C}
\newcommand{\R}{\mathbb R}
\newcommand{\G}{\mathbb G}
\renewcommand{\H}{\mathbb H}
\newcommand{\TL}{\text{TL}}
\newcommand{\mc}{\mathcal}
\newcommand{\Bb}{\mathcal{B}}
\newcommand{\Cc}{\mathcal{C}} 
\newcommand{\Tt}{\mathcal{T}}
\begin{document}

\begin{abstract}
For a compact quantum group $\G$ of Kac type, we study the existence of a Haar trace-preserving embedding of the von Neumann algebra $L^\infty(\G)$ into an ultrapower of the hyperfinite II$_1$-factor (the Connes embedding property for  $L^\infty(\G)$).  We establish a connection between the Connes embedding property for $L^\infty(\G)$ and the structure of certain quantum subgroups of $\G$, and use this to prove that the II$_1$-factors $L^\infty(O_N^+)$ and $L^\infty(U_N^+)$
  associated to the free orthogonal and free unitary quantum groups have 
  the Connes embedding property for all $N \ge 4$.  As an application, we deduce that the free entropy dimension of the standard generators  of $L^\infty(O_N^+)$ equals $1$ for all $N \ge 4$.  We also mention an application of our work to the problem of classifying the quantum subgroups of $O_N^+$.
\end{abstract}

\keywords{The Connes embedding problem, quantum group, matricial microstates, von Neumann algebra, free entropy.}
\thanks{2010 \it{Mathematics Subject Classification:}
\rm{46L65, 46L54, 20G42, 22D25}}

\maketitle

\section{Introduction}
\label{sec:intro}

The Connes embedding problem asks whether any finite von Neumann algebra with
separable predual embeds into an ultrapower of the hyperfinite II$_1$-factor in
a trace preserving way. This question was raised by Connes in \cite{co1}. See
\cite{CaLu,oza,pes} for nice introductions on this topic.  This central
question in the theory of operator algebras is still open, and has ramifications in many other
areas of mathematics, such as e.g.  non-commutative probability theory, quantum
information, and non-commutative algebraic geometry. In probabilistic terms, this
question amounts to knowing whether any finite family of elements of a bounded
tracial non-commutative probability space admits an asymptotic matrix model.  In the framework of Voiculescu's free entropy theory, this amounts to asking about the existence of
matricial microstates, see \cite{v1, v2}. 

The aim of this paper is to provide a new class of examples of Connes embeddable
von Neumann algebras, namely von Neumann algebras arising from non-coamenable compact
quantum groups of Kac type.  Within the operator algebraic framework, arguably
the most studied examples of compact quantum groups of Kac type include the free
orthogonal quantum groups $O_N^+$ and the free unitary quantum groups $U_N^+$.
Over the last two decades, this class of quantum groups has been extensively
studied, and remarkable connections have emerged between these quantum groups
and free probability theory.  These connections occur at the level of quantum
symmetries and asymptotic freeness results \cite{ba2, BaCo,BrSAF, Cu, CuSp}, and
also at the operator algebra level \cite{BrAP, dCFY, FiVe, Fr13, Is12, VaVe}.
In particular, the von Neumann algebras $L^\infty(O_N^+)$ and $L^\infty(U_N^+)$
share many of the same structural properties with the free group factors: they
are full type II$_1$-factors; they are strongly solid, and in particular they are
prime and have no Cartan subalgebra; they have the Haagerup property and are
weakly amenable with Cowling-Haagerup constant $1$ (CMAP).  But unlike the case of the free group factors, the II$_1$-factors $L^\infty(O_N^+)$ and $L^\infty(U_N^+)$, $N \ge 3$ were not known to be Connes embeddable (i.e., to admit matricial microstates).

Our main result in this paper is that as soon as $N \ge 4$, these von Neumann
algebras have the Connes embedding property. This result is presented as a
corollary of a more general and new stability result of the Connes embedding
property under certain quantum group theoretical operations.  More precisely, we consider a generalization within the category of compact quantum groups, of the notion of a compact group being topologically generated by a pair of closed subgroups (see Definition \ref{qsg}).  Using this technology, we show in Theorem \ref{corollary:CEPsgs} that if a Kac type compact quantum group $\G$ is topologically generated by a pair of quantum subgroups $\G_1,\G_2$ which have the additional property that $L^\infty(\G_1),L^\infty(\G_2)$ are Connes embeddable, then $L^\infty(\G)$ is also Connes embeddable.   The utility of
Theorem \ref{corollary:CEPsgs} lies in the fact that it can be used to reduce the problem of verifying the Connes embedding property for $L^\infty(\G)$ to the (possibly easier) problem of verifying the same property for the ``smaller'' algebras $L^\infty(\G_1),L^\infty(\G_2)$.  When dealing with the specific examples of  $L^\infty(O_N^+)$ and $L^\infty(U_N^+)$ , this theorem in fact allows us to establish the Connes embedding property via an induction procedure over the dimension parameter $N$. 
Another interesting feature of the embedding of $L^\infty(\G)$ into an ultrapower of the hyperfinite II$_1$-factor given by Theorem \ref{corollary:CEPsgs} is that it is obtained somewhat explicitly in terms of the embeddings associated to the given quantum subgroups $\G_1,\G_2$.  See Remark \ref{rem:explicit} for details.   It is our hope that this observation can lead to a more systematic understanding of how to construct explicit matricial microstates for certain quantum group von Neumann algebras.

The paper is organized as follows.  Section \ref{sec:intro} contains preliminaries about
compact and free quantum groups.  Section \ref{section:cep} recalls facts about the Connes
embedding property and relates this property for quantum group von Neumann algebras to the structure of quantum subgroups. In Section \ref{section:mainresult} the
Connes embedding property for $L^\infty(O_N^+)$ and $L^\infty(U_N^+)$, $N\geq 4$, is derived through
the study of specific quantum subgroups. Finally, in Section \ref{section:apps} we consider some
applications of our results to free entropy dimension and to the problem of classifying the quantum subgroups of $O_N^+$ which contain the classical orthogonal group $O_N$ as a quantum subgroup.

\section{Preliminaries}
\label{sec:intro}

\subsection{Compact quantum groups} \label{section:cqg}

In this section we recall some basic facts on compact quantum groups. We follow
\cite{wo2} and \cite{MaVa} and refer to these papers for the facts stated below.

A compact quantum group is a pair $\G = (A,\Delta )$ where $A$ is a unital
$C^*$-algebra and $\Delta : A\to A\otimes A$ is a unital $*$-homomorphism
satisfying
\begin{align*}
  &(\iota \otimes \Delta )\Delta = (\Delta \otimes \iota)\Delta \quad\mbox{   (coassociativity)}\\
  &[\Delta (A)(1\otimes A)]=[\Delta (A) (A\otimes 1)]=A\otimes A \quad \mbox{
    (non-degeneracy)},
\end{align*}
where $[S]$ denotes the norm-closed linear span of a subset $S\subset A \otimes
A$. Here and in the rest of the paper,  the symbol $\otimes$ will denote the minimal tensor product of C$^\ast$-algebras, $\overline{\otimes}$ will denote the spatial tensor product of von Neumann algebras, and $\odot$ will denote the algebraic tensor product of complex associative algebras.  The homomorphism $\Delta$ is called a \textit{coproduct}.  The C$^\ast$-algebra $A$ together with the coproduct $\Delta$ is often called a {\it Woronowicz C$^\ast$-algebra}.


For any compact quantum group $\G = (A,\Delta)$, there exists a unique
\textit{Haar state} $h: A\to \mathbb{C}$ which satisfies the following left and
right invariance property, for all $a\in A$:
\begin{equation} \label{Haar} (h\otimes \iota)\Delta (a)=(\iota\otimes h)\Delta
  (a)=h(a)1.
\end{equation}
We say that a compact quantum group $\G$ is \textit{of Kac type} if $h$ is a
tracial state.  Note that in general $h$ may not be faithful on $A$.  In any
case, we can construct a GNS representation $\pi_h:A \to \mc B(L^2(\G))$, where
$L^2(\G)$ is the Hilbert space obtained by separation and completion of $A$ with
respect to the sesquilinear form $\langle a |b \rangle = h(a^*b),$ and $\pi_h$
is the natural extension to $L^2(\G)$ of the left multiplication action of $A$
on itself. The C$^\ast$-algebra \[C_r(\G) = \pi_h(A) \subset \mc B(L^2(\G))\] is
called the \textit{reduced C$^\ast$-algebra of functions on $\G$}.  Due to the
invariance properties of the Haar state $h$, the coproduct $\Delta$ extends to a
unital $\ast$-homomorphism $\Delta_r:C_r(\G) \to C_r(\G) \otimes C_r(\G)$,
making the pair $(C_r(\G),\Delta_r)$ a compact quantum group (with faithful Haar
state), called the \textit{reduced version of $\G$}.  The \textit{von Neumann
  algebra of $\G$} is given by
\[L^\infty(\G) = C_r(\G)'' \subseteq \mc B(L^2(\G)).\] We note that $\Delta_r$
extends to an injective normal $\ast$-homomorphism $\Delta_r:L^\infty(\G)\to
L^\infty(\G) \overline{\otimes} L^\infty(\G)$, and the Haar state on $C_r(\G)$
extends to a faithful normal $\Delta_r$-invariant state on $L^\infty(\G)$.

Let $H$ be a Hilbert space and $u\in M(\mc K(H)\otimes A)$ be an invertible
(unitary) multiplier. The multiplier $u$ is called a \textit{(unitary)
  representation} of $\G$ if, following the leg numbering convention,
\begin{equation} \label{rep} (\iota \otimes \Delta) u=u_{12}u_{13}.
\end{equation}  
If $\dim H = n < \infty$, then (after fixing an orthonormal basis of $H$) we can
identify $u$ with an invertible matrix $u = [u_{ij}] \in M_n(A)$ and \eqref{rep}
means exactly that
\[
\Delta(u_{ij}) = \sum_{k=1}^n u_{ik} \otimes u_{kj} \qquad (1 \le i,j \le n).
\]
Of course the unit $1 \in A$ is always a representation of $\G$, called the
\textit{trivial representation}.

Let $u \in M(\mc K(H_1)\otimes A)$ and $v \in M(\mc K(H_2)\otimes A)$ be two
representations of $\G$.  Then then their \textit{direct sum} is the
representation $u \oplus v \in M(\mc K(H_1 \oplus H_2)\otimes A)$, and their
\textit{tensor product} is the representation $u\otimes v:= u_{13}v_{23} \in
M(\mc K(H_1 \otimes H_2)\otimes A)$.  An \textit{intertwiner} between $u$ and
$v$ is a bounded linear map $T :H_1 \to H_2$ such that $(T \otimes \iota )u = v
(T \otimes \iota)$.  The Banach space of all such intertwiners is denoted by
$\text{Hom}_\G(u, v)$. If there exists an invertible (unitary) intertwiner
between $u$ and $v$, they are said to be \textit{(unitarily) equivalent}. A
representation is said to be irreducible if its only self-intertwiners are the
scalar multiples of the identity map.  It is known that each irreducible
representation of $\G$ is finite dimensional and every finite dimensional
representation is equivalent to a unitary representation. In addition, every
unitary representation is unitarily equivalent to a direct sum of irreducible
representations.

Denote by $\Irr(\G)$ the collection of equivalence classes of finite dimensional
irreducible unitary representations of $\G$.  For each $\alpha \in \Irr( \G)$,
select a representative unitary representation $u^\alpha = [u^\alpha_{ij}] \in
M_{n_\alpha}(A)$.  The linear subspace $\Pol( \G) \subseteq A$ spanned by
$\{ u_{ij}^\alpha : \alpha \in \Irr(\hat \G), \ 1 \le i,j \le n_\alpha\}$ is a
dense $\ast$-subalgebra of $A$, called \textit{the algebra of polynomial
  functions on $\G$}.  $\Pol(\G)$ is in fact a Hopf $\ast$-algebra with
coproduct $\Delta_0: \Pol(\G) \to \Pol(\G) \odot \Pol(\G)$ given by restriction
of the coproduct $\Delta$.  The \textit{antipode} $S:\Pol(\G) \to
\Pol(\G)^{\text{op}}$ is the automorphism given by $(\iota \otimes S)(u^\alpha)
= (u^\alpha)^*$ and the \textit{counit} is the $\ast$-character
$\epsilon:\Pol(\G) \to \C$ given by $(\iota \otimes \epsilon)(u^\alpha) = 1.$
For any compact quantum group $\G$, the Haar state $h$ is always faithful on
$\Pol(\G)$.  Moreover $\G$ is Kac type if and only if $S^2 = \iota$.

The universal enveloping C$^\ast$-algebra of $\Pol(\G)$ is denoted by $C^u(\G)$.
By universality, the coproduct on $\Pol(\G)$ extends continuously to a coproduct
$\Delta_u$ on $C^u(\G)$, making $(C^u(\G), \Delta_u)$ a compact quantum group
(the \textit{universal version of} $\G$).  A compact quantum group $\G$ is
called \textit{coamenable} if the Haar state is faithful on $C^u(\G)$.  When
$\G$ is of Kac type, this is equivalent to $L^\infty(\G)$ being an injective
finite von Neumann algebra \cite{ruan}.

Given a pair of compact quantum groups $\G, \mathbb H$, we call $\mathbb H$ a 
\textit{quantum subgroup} of $\G$ if there exists a surjective
$\ast$-homomorphism $\pi:C^u(\G) \to C^u(\mathbb H)$ intertwining the respective
coproducts: $\Delta_{u,\mathbb H} \circ \pi = (\pi \otimes \pi ) \circ
\Delta_{u,\G}$. Given two compact quantum groups $\G_1$, $\G_2$ the \textit{dual
  free product} $\G_1 \hatfree \G_2$ of $\G_1$ and $\G_2$ is given by the
reduced free product algebra $A = C_r(\G_1) *_r C_r(\G_2)$ with respect to the
Haar states, endowed with the unique coproduct extending the ones of $C_r(\G_1)$
and $C_r(\G_2)$.

A compact quantum group $\G$ is called a \textit{compact matrix quantum group}
if there exists a finite dimensional unitary representation $u = [u_{ij}] \in
M_n(A)$ whose matrix elements generate $A$ as a C$^\ast$-algebra.  Such a
representation $u$ is called a \textit{fundamental representation} of $\G$.  In
this case $\Pol(\G)$ is simply the $\ast$-algebra generated by $(u_{ij})_{1 \le
  i,j \le n}$.

\begin{remark} \label{rem:discrete} Associated to any compact quantum group $\G$
  one can construct a unique {\it dual discrete quantum group} $\hat \G$.  See
  \cite{BaSk,PoWo} for an introduction to the basic theory of discrete-compact
  quantum group duality. Although we do not use the technology of discrete
  quantum groups here, it is useful to note that through the discrete-compact
  quantum group duality, the algebras $\Pol(\G)$, $C^u(\G)$, $C_r(\G)$,
  $L^\infty(\G)$ introduced before play the role of the familiar algebras $\C[\Gamma]$,
  $C^*(\Gamma)$, $C^*_r(\Gamma)$, $\mc L(\Gamma) = \lambda(\Gamma)''$
  (respectively) associated to a discrete group $\Gamma$. With this terminology,
  we see that the discrete quantum group associated with a dual free product
  $\G_1 \hatfree \G_2$ can be interpreted as the free product quantum group
  $\hat \G_1 * \hat \G_2$, and this justifies our notation.
\end{remark}

\subsection{Free orthogonal and free unitary quantum groups.}

We now introduce the free orthogonal and free unitary quantum groups, which form
the central objects of study in this paper. These quantum groups were first
introduced in the operator algebraic framework by Wang \cite{Wa}.  Purely
algebraic versions of these objects were also introduced by Dubois-Violette and Launer in \cite{DuLa}.

Let $N \ge 2$.  The \emph{free orthogonal quantum group} $O_N^+$ is the compact
quantum group given (in universal form) by the pair $(C^u(O_N^+),\Delta_u)$,
where
\[C^u(O_N^+) = C^*\big((u_{ij})_{1 \le i,j \le N} | \ u = [u_{ij}] \text{ is
  unitary in }M_N(C^u(O_N^+)) \ \& \ \bar u = u \big),\] where $\bar u =
[u_{ij}^*]$.  The coproduct $\Delta_u$ is defined so that $u$ becomes a unitary
representation of $O_N^+$. That is, $\Delta_u (u_{ij})=\sum_{k=1}^N
u_{ik}\otimes u_{kj}$ for each $1 \le i,j \le N$.  Note that the abelianization
of $C_u(O_N^+)$ is naturally isomorphic to the C$^\ast$-algebra of continuous
functions on the compact Lie group $O_N$.  In particular, $O_N$ is a quantum
subgroup of $O_N^+$.

The \textit{free unitary quantum group} $U_N^+$ is defined in the same fashion as
$O_N^+$, except that we no longer assume that the generators of $C^u(U_N^+)$ are
self-adjoint.  More precisely, we define
\[C^u(U_N^+) = C^*\big((u_{ij})_{1 \le i,j \le N} | \ u = [u_{ij}] \ \& \ \bar u
\text{ are unitary}\big).\] Similarly, $U_N$ is a quantum subgroup of $U_N^+$.

From the above definitions, it follows that for $\G = O_N^+, U_N^+$, the
antipode $S:\Pol(\G) \to \Pol(\G)$, $S(u_{ij}) = u_{ji}^*$ satisfies $S^2 =
\iota$.  In particular, the Haar states $h_\G$ are tracial.

\subsection{Invariant theory for $O_N^+$}
\label{sec_inv_theory}

Let $N \ge 2$ and $u$ be the fundamental representation of $O_N^+$ acting on the Hilbert space $H = \C^N$.  In this
section we briefly recall the structure of the intertwiner spaces
$\Hom_{O_N^+}(u^{\otimes k}, u^{\otimes l})$, $k, \l \in \N_0$, as described in
\cite{BaCo} (see also \cite{ba1}).  We start with a couple of definitions.

\begin{definition} Let $k,l \in \N_0$ be such that $k + l \in 2\N_0$. We denote
  $NC_2(k,l)$ the set of \emph{non-crossing pair partitions} of $k$ upper points
  and $l$ lower points, that is, partitions that can be represented by diagrams
  formed by an upper row of $k$ points, a lower row of $l$ points, and $(k +
  l)/2$ non-crossing strings joining pairs of points. The vector space of
  \emph{$(k,l)$ Temperley-Lieb diagrams} is the abstract complex vector space
  $\TL(k,l)$ freely spanned by $NC_2(k,l)$.
\end{definition}

Consider now the Hilbert space $H = \C^N$ and denote by
$(e_i)_{i=1}^N$ its standard basis. Each diagram $p\in NC_2(k,l)$ acts as a
linear map $T_p:H^{\otimes k} \to H^{\otimes l}$ given by
\begin{align}\label{Tp}
  T_p(e_{i_1}\otimes\ldots\otimes e_{i_k})=\sum_{j_1\ldots j_l =
    1}^N\begin{pmatrix}i_1 \ldots i_k\cr p\cr j_1\ldots
    j_l\end{pmatrix}e_{j_1}\otimes\ldots\otimes e_{j_l},
\end{align}
where the middle symbol is $1$ if all strings of $p$ join pairs of equal
indices, and is $0$ if not. We denote $TL_N(k,l) \subseteq \mc B(H^{\otimes
  k},H^{\otimes l})$ the subspace spanned by the maps $T_p$, $p \in
NC_2(k,l)$. This subspace is related to $O_N^+$-intertwiners as follows.

\begin{theorem}[\cite{ba1, BaCo}] \label{thm_invariants} Let $u$ be the
  fundamental representation of $O_N^+$.  Then for all $N \ge 2$,
  \begin{displaymath}
    \Hom_{O_N^+}(u^{\otimes k},u^{\otimes l})=TL_N(k,l).
  \end{displaymath}
  Moreover the family of linear maps $(T_p)_{p \in NC_2(k,l)}$ is linearly
  independent as soon as $N \ge 2$.
\end{theorem}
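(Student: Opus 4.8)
The plan is to prove the two inclusions of the asserted equality separately, using Woronowicz's Tannaka--Krein reconstruction theorem \cite{wo2} as the main engine, and then to treat the linear independence statement as a separate combinatorial fact about Gram matrices.

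First I would dispose of the inclusion $TL_N(k,l) \subseteq \Hom_{O_N^+}(u^{\otimes k}, u^{\otimes l})$. The intertwiner spaces $\Hom_{O_N^+}(u^{\otimes k}, u^{\otimes l})$ form a concrete rigid tensor $*$-category: they are closed under composition, tensor product and adjoints, and contain the identity maps. On the other hand, $TL_N$ is generated, as such a category, by the single ``cup'' morphism $T_\cup \colon \C \to H \otimes H$, $1 \mapsto \sum_i e_i \otimes e_i$ (its adjoint $T_\cup^*$ being the ``cap'', and every $T_p$ being obtainable from these and the identity strand by $\otimes$ and $\circ$). Hence it suffices to check that $T_\cup$ is an $O_N^+$-intertwiner between the trivial representation and $u \otimes u$. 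Unwinding the intertwining relation $(T_\cup \otimes \iota)1 = (u \otimes u)(T_\cup \otimes \iota)1$ reduces exactly to the identity $\sum_i u_{ai} u_{bi} = \delta_{ab} 1$, i.e.\ $u u^t = 1$; this is immediate from $u$ being unitary with self-adjoint entries ($u^* = u^t$). Closure of the category then propagates $T_\cup \in \Hom_{O_N^+}$ to all $T_p$, giving the inclusion.

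For the reverse inclusion I would verify that $TL_N$ satisfies the axioms of a concrete rigid tensor $*$-category over the single generating object $H = \C^N$: each $TL_N(k,l)$ is a linear subspace of $\mc B(H^{\otimes k}, H^{\otimes l})$ containing the identity when $k = l$, is closed under the adjoint $T_p \mapsto T_{p^*}$ (vertical reflection) and under tensoring $T_p \otimes T_q = T_{p \otimes q}$ (horizontal juxtaposition), and --- crucially --- is closed under composition, where stacking two non-crossing pairings produces another non-crossing pairing weighted by $N$ to the number of closed loops; rigidity is witnessed by the cup/cap pair. By Woronowicz's Tannaka--Krein reconstruction theorem there is then a unique compact matrix quantum group $\G = (C^u(\G), v)$ with $v$ a unitary representation on $H$ and $\Hom_\G(v^{\otimes k}, v^{\otimes l}) = TL_N(k,l)$. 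I would then identify $\G$ with $O_N^+$: the membership $T_\cup \in \Hom_\G$ of the trivial representation into $v \otimes v$ forces $v v^t = 1$, which together with the unitarity of $v$ yields $v^t = v^*$, i.e.\ $\bar v = v$; thus $v$ satisfies exactly the defining relations of $O_N^+$, and universality of $C^u(O_N^+)$ produces a surjection $\pi \colon C^u(O_N^+) \to C^u(\G)$ with $\pi(u_{ij}) = v_{ij}$. This exhibits $\G$ as a quantum subgroup of $O_N^+$, whence $\Hom_{O_N^+}(u^{\otimes k}, u^{\otimes l}) \subseteq \Hom_\G(v^{\otimes k}, v^{\otimes l}) = TL_N(k,l)$. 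Combined with the first inclusion this gives the desired equality.

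Finally, for linear independence of $(T_p)_{p \in NC_2(k,l)}$ when $N \ge 2$, I would compute the Gram matrix $G_{pq} = \Tr(T_p^* T_q) = N^{\ell(p,q)}$, where $\ell(p,q)$ is the number of loops in the closed diagram obtained by stacking $p$ against $q$, and argue that $G$ is non-degenerate for $N \ge 2$: its determinant is a known polynomial in $N$ whose roots all lie in the interval $[0,2)$, so it is nonzero for every integer $N \ge 2$. I expect the genuine obstacle to be the reverse inclusion, namely establishing that $O_N^+$ has \emph{no} intertwiners beyond the Temperley--Lieb ones: this is precisely the content requiring the Tannaka--Krein machinery together with correct bookkeeping of the quantum-subgroup direction, rather than any direct computation inside $\Pol(O_N^+)$.
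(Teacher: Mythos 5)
The paper never proves this statement: it is imported wholesale from Banica \cite{ba1} and Banica--Collins \cite{BaCo}, so the only meaningful comparison is with those references, and your argument is essentially the standard one found there. Both halves are handled correctly. The easy inclusion $TL_N(k,l)\subseteq\Hom_{O_N^+}(u^{\otimes k},u^{\otimes l})$ does follow from the single computation $uu^t=uu^*=1$ (which is exactly the relation $\bar u=u$ plus unitarity) together with the fact that every $T_p$ is built from cups, caps and identity strands by $\circ$ and $\otimes$. For the hard inclusion, your verification that $TL_N$ is a concrete rigid tensor $*$-category (composition of diagrams yields $N^{\#\mathrm{loops}}$ times another diagram, adjoints and tensor products of diagrams are diagrams, and the cup/cap satisfy the conjugate equations) is what licenses Woronowicz's Tannaka--Krein theorem, and your bookkeeping of the subgroup direction is exactly right: the reconstructed $\G$ satisfies the defining relations of $O_N^+$, hence is a quantum subgroup of it, hence has \emph{more} intertwiners, giving $\Hom_{O_N^+}\subseteq\Hom_{\G}=TL_N$. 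Two small caveats on the independence statement. First, the trace pairing $\Tr(T_p^*T_q)$ on $NC_2(k,l)$ should be identified, via Frobenius rotation, with the standard Temperley--Lieb Gram form on $NC_2(k+l)$ before invoking the known determinant; you glide over this harmless step. Second, the roots of that Gram determinant are the numbers $2\cos(\pi j/m)$, which fill $(-2,2)$ rather than $[0,2)$; the conclusion (non-vanishing at every integer $N\ge 2$) is unaffected, but be aware that this determinant evaluation (Di Francesco, Ko--Smolinsky) is itself a nontrivial combinatorial input that you cite rather than prove --- which is legitimate here, since the paper itself cites the entire theorem.
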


If the first index is zero we omit it and we denote $NC_2(k) = NC_2(0,k)$,
$TL(k) = TL(0,k)$. When there is no risk of confusion we will denote $\Fix_k =
\Fix(u^{\otimes k}) = \Hom_{O_N^+}(1,u^{\otimes k})\subseteq H^{\otimes k}$ the
subspace of fixed vectors for the representation $u^{\otimes k}$ of $O_N^+$, and
according to Theorem~\ref{thm_invariants} we have for $N\geq 2$:
\begin{displaymath}
  \Fix_k = \Span~ \{T_p \mid p\in NC_2(k)\} = TL_N(k).
\end{displaymath}

We also recall that Theorem~\ref{thm_invariants} has a classical counterpart
dating back to Brauer \cite{Brauer}. We denote $P_2(k,l)$ the set of all pair
partitions of $k$ upper points and $l$ lower points, and we observe that $T_p$
can still be defined for any $p \in P_2(k,l)$. Then we have
\begin{displaymath}
  \Hom_{O_N}(v^{\otimes k},v^{\otimes l}) = \Span~\{T_p \mid p \in P_2(k,l)\},
\end{displaymath}
where $v$ is the fundamental representation of $O_N$ on $H = \C^N$. Note however
that the maps $T_p$, for $p \in P_2(k,l)$, are not linearly independent for any
$N$, as soon as $k+l$ is big enough.

\begin{remark} \label{rem:unitaryinvariants} In what follows, we will not need
  to discuss the details of the invariant theory of $U_N^+$, although it was
  thoroughly described in \cite{ba2, BaCo}.  We will however, use the fact that
  if $w = [w_{ij}]$ and $\bar w = [w_{ij}^*]$ denote the fundamental
  representation of $U_N^+$ and its conjugate, acting on $H = \C^N$ and $\bar H$
  (respectively), then after canonically identifying $H$ and $\bar H$ in the
  obvious way, we have the equality of intertwiner spaces
  \[\Hom_{U_N^+}(1,(w \otimes \bar w)^{\otimes k}) = \Hom_{O_N^+}(1,u^{\otimes
    2k}) \qquad (k \ge 1). \]
\end{remark}

\section{The Connes embedding property} \label{section:cep}

Before specializing to quantum groups, let us first recall a few basic things
about the Connes embedding property in the context of unital $\ast$-algebras.

\subsection{Connes embeddable tracial $\ast$-algebras}

Let $A$ be a unital $\ast$-algebra and $\tau:A \to \C$ a (not necessarily
faithful) tracial state.  $A$ can be endowed with a sesquilinear form $\langle
x,y\rangle:=\tau (x^*y)$.  Let $N_\tau:=\{x\in A: \langle x,x\rangle=0\}$. By
the Cauchy-Schwarz inequality, $N_\tau:=\{x\in A: \langle x,y\rangle=0 \ \forall
y \in A\}$, and therefore $N_\tau$ is a linear subspace of $A$ and $\langle
\cdot ,\cdot \rangle$ can be defined naturally on the quotient space $A/N_\tau$,
where it is a non-degenerate sesquilinear form.  The resulting completion of
$A/N_\tau$ is denoted by $L^2(A,\tau)$.  We denote by $\Lambda_\tau:A\to A/N_\tau\subset L^2(A,\tau)$ the quotient map.  If $A$ is
generated as a $\ast$-algebra by elements $(x_i)_{i \in I}$ such that $\tau
((x_i^*x_i)^n)^{1/n}$ is bounded for each $i$, then there exists a unital
$\ast$-homomorphism $\pi_\tau:A \to B(L^2(A,\tau))$ satisfying
\[\pi_\tau (x)\Lambda_\tau(y)=\Lambda_\tau(xy) \qquad (x,y \in A).\]
The representation $\pi_\tau$ is usually called the GNS representation of $A$
with respect to the tracial state $\tau$.  Taking double commutants, we obtain
from $A$ a von Neumann algebra $\pi_\tau(A)'' \subseteq \mc B(L^2(A,\tau))$, and
the original state $\tau$ extends by continuity to a faithful normal tracial
state on $\pi_\tau(A)''$ still denoted by $\tau$.  Throughout this paper, we will always assume that our tracial $\ast$-algebras $(A,\tau)$ are such that $\pi_\tau$ exists and that the von Neumann algebra $\pi_\tau(A)''$ has a separable predual.

Let us briefly recall the ultrapower construction for the hyperfinite II$_1$-factor.  Let $R$ denote the hyperfinite
II$_1$-factor and $\tau_R$ its unique faithful normal tracial state. Let
$\omega$ be a free ultrafilter on $\mathbb{N}$ and let $I_\omega \subseteq
\ell^\infty(\mathbb{N},R)$ be the ideal consisting of those sequences
$(x_n)_{n=1}^\infty$ such that $\lim_{n\to\omega}\tau_R((x_n)^*x_n)=0$.  Then the {\it ultrapower of the hyperfinite II$_1$-factor (along the ultrafilter $\omega$)} is the quotient $R^\omega:=\ell^\infty(\mathbb{N},R)/I_\omega$, which turns out
to be a II$_1$-factor with faithful normal tracial state
$\tau_{R^\omega}((x_n)_n + I_\omega) = \lim_{n\to\omega}\tau_R(x_n)$.  We now
come to the fundamental concept of this paper.

\begin{definition}\label{def-cep}
  Let $A$ be a unital $\ast$-algebra equipped with a tracial state $\tau$.  The
  state $\tau$ is said to have the {\it Connes embedding property} if the finite
  von Neumann algebra $(\pi_\tau(A)'',\tau)$ can be embedded into an ultrapower
  $R^\omega$ of the hyperfinite II$_1$-factor $R$ in a trace--preserving
  way.  We write $\CEP(A)$ for the set of such tracial states $\tau:A \to \C$.
\end{definition}

Since our point of view and motivation is that of matricial microstates, let us also recall
the following definition.

\begin{definition} \label{defn:microstates} Let $A$ be a unital $\ast$-algebra
  equipped with a faithful tracial state $\tau$.  If $X=(x_1,\ldots,x_n)$ is a
  finite subset of $A_{sa}:=\{x\in A: x^*=x\}$, we say that $X$ {\em has
    matricial microstates (relative to $\tau$)} if for every $m\in\mathbb{N}$
  and every $\epsilon>0$, there is a $k\in\mathbb{N}$ and self-adjoint matrices
  $a_1,\ldots,a_n \in M_k(\C)$ such that whenever $1\le p\le m$ and
  $i_1,\ldots,i_p\in\{1,\ldots,n\}$, we have
  \begin{equation}\label{eq:Amomxmom}
    |\text{tr}_k(a_{i_1}a_{i_2}\cdots a_{i_p})-\tau(x_{i_1}x_{i_2}\cdots x_{i_p})|<\epsilon.
  \end{equation}
  where $\tr_k$ is the normalized trace on $M_k(\mathbb{C})$ satisfying $\tr_k(1) = 1$.
\end{definition}


The following von Neumann algebraic result connecting the existence of matricial
microstates to the Connes embedding property is well known, see for example
\cite[Prop. 3.3]{MR2465797}:
\begin{proposition}\label{prop:microstates}
  Let $M$ be a von Neumann algebra with separable predual equipped with a
  faithful normal tracial state $\tau$. Then the following are equivalent:
  \begin{enumerate}
  \item $\tau \in \CEP(M)$ (i.e., $M$ has the Connes embedding property).
  \item Every finite subset $X\subset M_{sa}$ has matricial microstates
    relative to $\tau$.
  \item If $Y\subset M_{sa}$ is a generating set for $M$, then every finite
    subset $X\subset Y$ has matricial microstates.
  \end{enumerate}
  In particular, if $Y\subset M_{sa}$ is a finite generating set of $M$ then the above
  conditions are equivalent to $Y$ having matricial microstates.
\end{proposition}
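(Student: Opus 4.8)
The plan is to establish the equivalences $(1)\Leftrightarrow(2)\Leftrightarrow(3)$ through a cycle of implications, treating $(1)\Rightarrow(2)$, $(2)\Rightarrow(3)$, and $(3)\Rightarrow(1)$ in turn, and then observing that the final ``in particular'' assertion is just the special case where $Y$ is finite. The implication $(2)\Rightarrow(3)$ is immediate, since any finite $X\subseteq Y\subseteq M_{sa}$ is in particular a finite subset of $M_{sa}$, so condition $(2)$ applies directly. The genuine content lies in the two reverse directions connecting the abstract embedding $(1)$ with the concrete microstate condition $(2)$, and in the direction $(3)\Rightarrow(1)$ which requires upgrading microstates on a generating set to a global embedding.

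First I would prove $(1)\Rightarrow(2)$. Assume there is a trace-preserving embedding $\Phi\colon M\hookrightarrow R^\omega$. Given a finite set $X=(x_1,\ldots,x_n)\subseteq M_{sa}$, lift each self-adjoint element $\Phi(x_i)\in R^\omega$ to a bounded sequence $(y_i^{(k)})_k\in\ell^\infty(\N,R)$, which may be taken self-adjoint by averaging with the adjoint (this does not change the class). Since $R$ is hyperfinite, I approximate each $y_i^{(k)}$ in $2$-norm by elements of a finite-dimensional matrix subalgebra $M_{d(k)}(\C)\subseteq R$, and then compress to genuine self-adjoint matrices $a_1,\ldots,a_n\in M_{d(k)}(\C)$. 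The key point is that the tracial state on $R^\omega$ is computed as an $\omega$-limit of the traces $\tau_R$, and $\tau_R$ restricted to the matrix subalgebra agrees with the normalized trace $\tr_{d(k)}$; hence the moments $\tr_{d(k)}(a_{i_1}\cdots a_{i_p})$ converge along $\omega$ to $\tau_{R^\omega}(\Phi(x_{i_1})\cdots\Phi(x_{i_p}))=\tau(x_{i_1}\cdots x_{i_p})$. For any fixed $m$ and $\epsilon$, the defining property of $\omega$ furnishes a single index $k$ at which the finitely many moment inequalities \eqref{eq:Amomxmom} all hold, which yields the required matrices.

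The main obstacle is the direction $(3)\Rightarrow(1)$, where I only have microstates for finite subsets of a generating set $Y$ and must produce an embedding of all of $M$. The strategy is to build the embedding ``by hand'' from the microstates. Exhausting $Y$ by an increasing sequence of finite subsets $Y_1\subseteq Y_2\subseteq\cdots$ with $\bigcup_k Y_k$ generating $M$, and taking $\epsilon=1/m$ in Definition~\ref{defn:microstates}, I obtain for each level a matrix algebra $M_{k(m)}(\C)$ and matrices matching all $\ast$-words of length $\le m$ in $Y_m$ to within $1/m$. Realizing each $M_{k(m)}(\C)$ as a unital subfactor of $R$ via the trace-preserving inclusion and assembling these into sequences indexed by $m$, I define a map sending each generator to the class of its associated matrix sequence in $R^\omega$. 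The essential check is that along the ultrafilter $\omega$ all mixed $\ast$-moments of the images agree with those of the original generators under $\tau$; this determines a trace-preserving $\ast$-homomorphism on the polynomial algebra generated by $Y$, and faithfulness of $\tau$ together with the trace-preserving property guarantees it extends to a normal embedding of $M=\pi_\tau(\Span Y)''$ into $R^\omega$. The delicate part is verifying that one genuinely obtains a well-defined $\ast$-homomorphism rather than merely a moment-matching linear map, which rests on the observation that a trace-preserving moment-matching map automatically respects products and adjoints in the $2$-norm and hence, by standard $\ast$-algebra arguments, extends to the von Neumann algebra level.

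Finally, for the ``in particular'' clause, if $Y$ is itself finite and generating, then $(3)$ reduces to the single requirement that $Y$ have matricial microstates (there being essentially one maximal finite subset to consider), so the chain of equivalences specializes to give that $Y$ having microstates is equivalent to $(1)$. I would cite \cite[Prop.~3.3]{MR2465797} as the source to keep the exposition brief, noting that the argument is entirely standard and that the separability of the predual of $M$ is what permits the countable exhaustion used in $(3)\Rightarrow(1)$.
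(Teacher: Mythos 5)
The paper itself contains no proof of this proposition: it is stated as a well-known fact with a pointer to \cite[Prop.~3.3]{MR2465797}, so the comparison is between your sketch and the standard argument that this citation encodes. Your route is precisely that standard argument, and its architecture is sound. For (1)$\Rightarrow$(2), lifting $\Phi(x_i)$ to self-adjoint bounded sequences and compressing via trace-preserving conditional expectations onto an increasing chain of matrix subalgebras of $R$ (these expectations are norm-decreasing and preserve self-adjointness, so the moment estimates close up) is correct. For (3)$\Rightarrow$(1), the key points you identify are the right ones: separability of the predual plus Kaplansky density yields a countable generating family to exhaust; matching of all mixed $\ast$-moments plus faithfulness of the ultraproduct trace shows the moment-matching map kills the correct ideal and hence descends to a genuine trace-preserving $\ast$-homomorphism on the $\ast$-algebra generated by $Y$; and such a homomorphism is automatically norm-preserving (in a finite von Neumann algebra with faithful normal trace, $\|a\|=\lim_{p}\|a\|_{p}$), so it extends to a normal trace-preserving embedding of $M=W^*(Y)$ into $R^\omega$.

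One step fails as literally written, and it is worth naming because it is forced on you by the paper's own conventions. Definition \ref{defn:microstates} imposes no bound on the operator norms of the microstate matrices $a_i$. Consequently, in your (3)$\Rightarrow$(1) the sequence of matrices attached to a fixed generator, viewed inside unital copies of $M_{k(m)}(\C)\subseteq R$, need not lie in $\ell^\infty(\N,R)$ at all, and then it does not define an element of $R^\omega$. The standard repair: since $\tr_k(a_i^{2n})$ approximates $\tau(x_i^{2n})\le\|x_i\|^{2n}$ once $m\ge 2n$, the spectral mass of $a_i$ outside $[-\|x_i\|-1,\|x_i\|+1]$ is small, so truncating $a_i$ by functional calculus at level $\|x_i\|+1$ perturbs it by an arbitrarily small amount in $\|\cdot\|_2$ while leaving all low-order moments nearly unchanged; the truncated matrices are again microstates, now uniformly bounded. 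Inserting this truncation at each stage $m$ makes your assembled sequences genuinely bounded, after which the rest of your argument (well-definedness via faithfulness, normal extension, and the ``in particular'' clause as the finite-$Y$ special case) goes through.
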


The following lemma gives some important stability properties of $\CEP(A)$ that
will be essential in the sequel.

\begin{lemma} \label{lemma:stabilityproperties} Let $(A,\tau)$ and
  $(A_i,\tau_i)_{i=1,2}$ be unital $\ast$-algebras equipped with tracial states
  $\tau$ and $(\tau_i)_{i=1,2}$ respectively. The following assertions are true.
  \begin{enumerate}
  \item \label{one} If $B \subseteq A$ is a unital $\ast$-subalgebra and $\tau
    \in \CEP(A)$, then $\tau|_B \in \CEP(B)$.
  \item \label{two} If $\pi:A_1 \to A_2$ is a unital $\ast$-homomorphism such
    that $\tau_2 \circ \pi = \tau_1$ and $\tau_1 \in \CEP(A_1)$, then
    $\tau_2|_{\pi(A_1)} \in \CEP(\pi(A_1))$.
  \item \label{three} If $\tau_1 \in \CEP(A_1)$ and $\tau_2 \in \CEP(A_2)$, then
    $\tau_1 \otimes \tau_2 \in \CEP(A_1 \odot A_2)$ and $\tau_1*\tau_2 \in
    \CEP(A_1*A_2)$ where $*$ denotes the reduced free product of tracial unital
    $\ast$-algebras \cite{NiSp}.
  \item \label{four} If $(\tau_n)_{n \in \N} \subset \CEP(A)$ is a sequence such
    that the pointwise limit $\tau := \lim_{n\to \infty} \tau_n$ exists, then
    $\tau \in \CEP(A)$.
  \end{enumerate}
\end{lemma}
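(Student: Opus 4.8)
The plan is to reduce everything to the language of matricial microstates via Proposition \ref{prop:microstates}. The first step is to record the following reformulation: a tracial state $\tau$ lies in $\CEP(A)$ if and only if every finite self-adjoint subset of $A$ has matricial microstates relative to $\tau$. Indeed, writing $M = \pi_\tau(A)''$ with its faithful normal trace (still denoted $\tau$), the set $\pi_\tau(A_{sa})$ is self-adjoint and generates $M$, and for $x_{i_1},\ldots,x_{i_p} \in A$ the moment $\tau(x_{i_1}\cdots x_{i_p})$ equals the corresponding moment of the images $\pi_\tau(x_{i_j}) \in M$. Hence the microstates condition of Definition \ref{defn:microstates} for a finite subset of $A_{sa}$ is literally the microstates condition for its image in $M_{sa}$, and the equivalence of (1) and (3) in Proposition \ref{prop:microstates} gives the claim. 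The key point, used repeatedly below, is that matricial microstates depend only on the $\ast$-moments $\tau(x_{i_1}\cdots x_{i_p})$.

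For \eqref{one}, \eqref{two} and \eqref{four} this reformulation makes the proofs essentially formal. For \eqref{one}: any finite self-adjoint subset of $B$ is a finite self-adjoint subset of $A$, and the microstates conditions relative to $\tau|_B$ and to $\tau$ coincide since the relevant moments are the same; as $\tau \in \CEP(A)$ supplies microstates for all finite self-adjoint subsets of $A$, we are done. For \eqref{two}: using that $\pi(A_1)_{sa} = \pi((A_1)_{sa})$, any finite self-adjoint subset $\{\pi(x_1),\ldots,\pi(x_n)\}$ of $\pi(A_1)$ has moments $\tau_2(\pi(x_{i_1})\cdots\pi(x_{i_p})) = \tau_2(\pi(x_{i_1}\cdots x_{i_p})) = \tau_1(x_{i_1}\cdots x_{i_p})$, so any microstates for $\{x_1,\ldots,x_n\}$ relative to $\tau_1$ serve verbatim as microstates for $\{\pi(x_1),\ldots,\pi(x_n)\}$ relative to $\tau_2$. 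For \eqref{four}: given a finite self-adjoint subset $X \subset A$, a moment bound $m$, and $\epsilon > 0$, pointwise convergence $\tau_n \to \tau$ lets us pick $n$ with all $\ast$-moments of length $\le m$ within $\epsilon/2$ of those of $\tau$; since $\tau_n \in \CEP(A)$ provides microstates for $X$ within $\epsilon/2$ of the $\tau_n$-moments, the triangle inequality yields microstates within $\epsilon$ of the $\tau$-moments.

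For the tensor part of \eqref{three} I would first observe that $\pi_{\tau_1 \otimes \tau_2}(A_1 \odot A_2)'' = M_1 \overline{\otimes} M_2$, where $M_i = \pi_{\tau_i}(A_i)''$, so it suffices to embed $M_1 \overline{\otimes} M_2$ into some $R^\omega$. Using that $R \overline{\otimes} R \cong R$, the assignment $[(x_n)_n] \otimes [(y_n)_n] \mapsto [(x_n \otimes y_n)_n]$ defines a trace-preserving (hence injective, normal) embedding $R^\omega \overline{\otimes} R^\omega \hookrightarrow (R \overline{\otimes} R)^\omega \cong R^\omega$; composing with the tensor $M_1 \overline{\otimes} M_2 \hookrightarrow R^\omega \overline{\otimes} R^\omega$ of the given embeddings finishes this case. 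Equivalently, in microstates language, one tensors microstates $a_i \in M_k$ for the $A_1$-generators with microstates $b_j \in M_{k'}$ for the $A_2$-generators to form the commuting families $\{a_i \otimes 1_{k'}\}$ and $\{1_k \otimes b_j\}$, whose mixed moments factor as $\tr_k(\cdots)\,\tr_{k'}(\cdots)$ and therefore approximate the moments of $\tau_1 \otimes \tau_2$.

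The free-product part of \eqref{three} is the main obstacle and is where the real content lies. Here the microstates reformulation reduces the problem to the following statement: given matricial microstates for a self-adjoint generating family $X$ of $A_1$ (relative to $\tau_1$) and for a self-adjoint generating family $Y$ of $A_2$ (relative to $\tau_2$), one must produce microstates for $X \cup Y$ relative to the free product trace $\tau_1 * \tau_2$. The plan is to invoke Voiculescu's asymptotic freeness of random matrices: after amplifying to a common matrix size $k$ (replacing $a_i$ by $a_i \otimes 1$ and $b_j$ by $1 \otimes b_j$, which preserves normalized-trace moments), one takes microstates $a_1,\ldots \in M_k$ for $X$ and $b_1,\ldots \in M_k$ for $Y$ and replaces the second family by $U b_1 U^*,\ldots$ for a Haar-distributed random unitary $U \in U(k)$. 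As $k \to \infty$ the families $\{a_i\}$ and $\{U b_j U^*\}$ become asymptotically free while each retains its individual moments, so for $k$ large there exists a specific $U$ for which the joint moments of $\{a_i\} \cup \{U b_j U^*\}$ approximate the prescribed $(\tau_1 * \tau_2)$-moments within $\epsilon$; these matrices are the desired microstates. The delicate points to handle carefully are the reconciliation of differing matrix sizes by amplification, the uniform norm control needed to pass approximate freeness through words of bounded length, and invoking the correct form of Voiculescu's theorem for a deterministic family and its Haar-unitary conjugate. Granting this, $X \cup Y$ has microstates and $\tau_1 * \tau_2 \in \CEP(A_1 * A_2)$, where \cite{NiSp} supplies the identification of the reduced free product of tracial $\ast$-algebras used to match the moments.
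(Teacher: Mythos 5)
Your proof is correct in substance, but it takes a genuinely different route from the paper. The paper disposes of the lemma in three lines by citing von Neumann algebra level facts: \eqref{one} and \eqref{two} follow from stability of the Connes embedding property under trace-preserving inclusions, \eqref{three} from the known stability under tensor products and tracial free products of von Neumann algebras \cite{Popa,v3}, and \eqref{four} from an ultraproduct construction (or, as the paper notes, directly from the definition of microstates). You instead work almost entirely at the microstates level: your treatments of \eqref{one}, \eqref{two}, \eqref{four} are the elementary moment-matching arguments (and coincide with the paper's ``alternate'' proof of \eqref{four}), your tensor-product argument via $R^\omega \mathbin{\overline{\otimes}} R^\omega \hookrightarrow (R \mathbin{\overline{\otimes}} R)^\omega \cong R^\omega$ is exactly the standard proof of the fact the paper cites, and your free-product argument is essentially a reconstruction of the proof of the cited theorem of Voiculescu \cite{v3} via asymptotic freeness of Haar-conjugated matrices. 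What your route buys is self-containedness and explicitness -- microstates for the free product are actually constructed, in the spirit of the paper's Remark \ref{rem:explicit} -- at the cost of carrying the real technical weight yourself. In particular, the point you flag but do not resolve is a genuine one: Definition \ref{defn:microstates} imposes no operator-norm bound on the microstates, whereas Voiculescu's asymptotic freeness theorem for deterministic matrices conjugated by Haar unitaries requires uniformly norm-bounded families. This is repairable by a standard upgrade -- extract microstates from the $R^\omega$-embedding using representatives $(x_n)_n$ with $\sup_n \|x_n\| \le \|x\|$, or truncate by functional calculus, so that norm-controlled microstates exist whenever microstates do -- but as written that step is a sketch of the cited result rather than a proof of it, and it is precisely the step the paper avoids by invoking \cite{v3}.
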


\begin{proof}
  \eqref{one} and \eqref{two} follow from the fact that the Connes embedding
  property is stable under (trace-preserving) inclusions of von Neumann
  algebras.  \eqref{three} follows from the fact that the Connes embedding
  property is stable under tensor products and free products of von Neumann
  algebras with respect to tracial states \cite{Popa,v3}.  \eqref{four} is a
  direct ultra product construction.  Alternately, this readily follows from the
  definition of matricial microstates.
\end{proof}

\subsection{Hyperlinear discrete quantum groups}

Now let $\G$ be a compact quantum group (of Kac type) and consider the unital
Hopf $\ast$-algebra $A = \Pol(\G)$, with coproduct $\Delta$ (from now on we drop the notation $\Delta_0$ and simply write $\Delta$).  Given a tracial
state $\tau:\Pol(\G) \to \C$, we will write $\tau \in \CEP(\G)$ if $\tau \in
\CEP(\Pol(\G))$.  Our main interest is in determining when the Haar state $h_\G$ belongs to $\CEP(\G)$. I.e., when $(L^\infty(\G), h_\G)$ is a Connes embeddable von
Neumann algebra.  In this case, (following the analogies with discrete groups
given in Remark \ref{rem:discrete}) we will say that the dual discrete quantum
group $\hat \G$ is {\it hyperlinear}.

We start with a crucial but elementary lemma.  Given two states $\tau_1,\tau_2$
on $\Pol(\G)$, recall that their \textit{convolution product}
$\tau_1\star\tau_2: = (\tau_1 \otimes \tau_2) \circ \Delta$ is again a state on
$\Pol(\G)$ and $\tau_1\star \tau_2$ is tracial if both $\tau_1,\tau_2$ are.

\begin{lemma} \label{lemma:convolution} If $\tau_1,\tau_2 \in \CEP(\G)$, then
  $\tau_1 \star \tau_2 \in \CEP(\G)$.
\end{lemma}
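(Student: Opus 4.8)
The plan is to realize the convolution state $\tau_1 \star \tau_2$ as the restriction, along the coproduct, of the product state $\tau_1 \otimes \tau_2$ on $\Pol(\G) \odot \Pol(\G)$, and then to invoke the stability properties recorded in Lemma \ref{lemma:stabilityproperties}. The entire argument is formal once the right identifications are in place.

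First I would apply Lemma \ref{lemma:stabilityproperties}\eqref{three} with $A_1 = A_2 = \Pol(\G)$: since $\tau_1,\tau_2 \in \CEP(\Pol(\G))$, the product state satisfies $\tau_1 \otimes \tau_2 \in \CEP(\Pol(\G) \odot \Pol(\G))$. Next I would record that the coproduct $\Delta : \Pol(\G) \to \Pol(\G) \odot \Pol(\G)$ is a unital $\ast$-homomorphism which is moreover \emph{injective}, as witnessed by the counit identity $(\epsilon \otimes \iota)\circ \Delta = \iota$. Hence $B := \Delta(\Pol(\G))$ is a unital $\ast$-subalgebra of $\Pol(\G)\odot\Pol(\G)$, and $\Delta$ restricts to a $\ast$-isomorphism of $\Pol(\G)$ onto $B$. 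By the very definition of the convolution product we have $(\tau_1 \otimes \tau_2)\circ \Delta = \tau_1 \star \tau_2$, so this isomorphism is state-preserving once $\Pol(\G)$ is equipped with $\tau_1 \star \tau_2$ and $B$ with the restriction $(\tau_1 \otimes \tau_2)|_B$.

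To finish, I would apply Lemma \ref{lemma:stabilityproperties}\eqref{one} to the inclusion $B \subseteq \Pol(\G)\odot\Pol(\G)$: from $\tau_1 \otimes \tau_2 \in \CEP(\Pol(\G)\odot\Pol(\G))$ we deduce $(\tau_1 \otimes \tau_2)|_B \in \CEP(B)$. Since the Connes embedding property is manifestly invariant under state-preserving $\ast$-isomorphisms (the two GNS von Neumann algebras, together with their traces, are isomorphic), transporting this back along $\Delta$ yields $\tau_1 \star \tau_2 \in \CEP(\Pol(\G))$, which is the claim. Note that $\tau_1 \star \tau_2$ is indeed tracial because $\tau_1,\tau_2$ are, as already observed before the statement, so Definition \ref{def-cep} applies to it.

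The argument has no real obstacle; the one point deserving a moment's care is the injectivity of $\Delta$. It is precisely injectivity that guarantees that convolution produces a genuine state-preserving \emph{copy} of $\Pol(\G)$ inside the tensor product, so that one may apply the subalgebra-oriented part \eqref{one} of Lemma \ref{lemma:stabilityproperties} rather than the image-oriented part \eqref{two}. I would also keep in view the standing assumption that the relevant GNS representations exist and that the ambient von Neumann algebras have separable predual; for $\Pol(\G)$ this is automatic, as it is generated by the bounded matrix coefficients $u^\alpha_{ij}$.
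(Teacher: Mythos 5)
Your proof is correct and is essentially the paper's own argument: both realize $\tau_1 \star \tau_2$ as the pullback along $\Delta$ of the restriction of $\tau_1 \otimes \tau_2$ to the subalgebra $\Delta(\Pol(\G))$, invoking Lemma \ref{lemma:stabilityproperties} for the tensor product, the passage to a subalgebra, and the transport along the state-preserving isomorphism $\Delta^{-1}$. Your explicit justification of the injectivity of $\Delta$ via the counit identity is a detail the paper leaves implicit but uses in writing $\Delta^{-1}$.
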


\begin{proof} Let $\sigma = \tau_1 \otimes \tau_2|_{\Delta(\Pol(\G))}$.  Then it
  follows from Lemma \ref{lemma:stabilityproperties} \eqref{one}-\eqref{two}
  that $\sigma \in \CEP(\Delta(\Pol(\G)))$.  Moreover, since
  $\Delta^{-1}:\Delta(\Pol(\G)) \to \Pol(\G)$ is a $\ast$-isomorphism such that
  $(\tau_1\star\tau_2)\circ \Delta^{-1} = \sigma$, another application of Lemma
  \ref{lemma:stabilityproperties} \eqref{two} gives $\tau_1\star\tau_2 \in
  \CEP(\G)$.
\end{proof}

As one might expect, duals of coamenable compact quantum groups of Kac type are
always hyperlinear.

\begin{lemma}\label{lemma:coamenable}
  Let $\G$ be a coamenable compact quantum group of Kac type.  Then $\hat \G$ is
  hyperlinear.
\end{lemma}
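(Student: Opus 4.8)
The plan is to exploit the hypothesis of coamenability to reduce the problem to a known stability property, rather than constructing microstates by hand. Recall that $\G$ is of Kac type and coamenable, which by the characterization cited in the text means exactly that $L^\infty(\G)$ is an injective finite von Neumann algebra. So the cleanest route is: \emph{injective finite von Neumann algebras with separable predual are automatically Connes embeddable}. Indeed, by Connes' deep structure theorem every injective II$_1$-factor is hyperfinite, hence isomorphic to $R$ itself and trivially embeds (trace-preservingly) into $R \subseteq R^\omega$; more generally an injective finite von Neumann algebra with separable predual decomposes as a direct integral of hyperfinite pieces and the Connes embedding property passes through such decompositions. Since $(L^\infty(\G), h_\G)$ is exactly such an algebra, we conclude $h_\G \in \CEP(\G)$, i.e. $\hat\G$ is hyperlinear.

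A second, more self-contained approach that avoids quoting the full injectivity$\,\Leftrightarrow\,$hyperfiniteness machinery proceeds through coamenability more directly. Coamenability of $\G$ (Kac type) gives an amenability-type approximation of the Haar state: concretely, the existence of a net of finite-dimensional unitary corepresentations whose characters approximate the counit, or equivalently a F\o lner-type sequence for $\hat\G$. From such data one builds a sequence of finite-dimensional (matricial) trace-preserving near-representations of $\Pol(\G)$ whose moments converge to those of $h_\G$, which is precisely the matricial-microstates condition of Definition \ref{defn:microstates}. By Proposition \ref{prop:microstates} this yields $h_\G \in \CEP(\G)$.

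The steps I would carry out, in order, are: (1) invoke the stated equivalence that $\G$ Kac and coamenable $\Longleftrightarrow$ $L^\infty(\G)$ injective finite; (2) recall that the predual is separable since $\Pol(\G)$ is generated by countably many matrix coefficients $u^\alpha_{ij}$; (3) apply the injective-implies-hyperfinite (hence Connes-embeddable) fact, using Lemma \ref{lemma:stabilityproperties}\eqref{four} if a direct-integral or limiting argument is needed to reduce to the factor case; (4) translate back to the language $h_\G \in \CEP(\G)$ and hence $\hat\G$ hyperlinear.

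The main obstacle is essentially bookkeeping rather than deep: it is making sure the embedding is genuinely \emph{trace-preserving} with respect to $h_\G$ and not merely an abstract isomorphism, and handling the possibility that $L^\infty(\G)$ is not a factor (so that one needs the direct-integral version of Connes' theorem, or an appeal to the fact that injectivity is preserved under the relevant decompositions). If one instead takes the microstates route, the delicate point becomes extracting the approximating matrix models from the coamenability data and verifying the moment estimates \eqref{eq:Amomxmom} uniformly; but since coamenable duals are the ``easy'' amenable case, I expect either route to go through without serious difficulty.
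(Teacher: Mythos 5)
Your first route is exactly the paper's proof: the paper likewise invokes Ruan's characterization (coamenable Kac type $\Rightarrow$ $(L^\infty(\G),h_\G)$ is a hyperfinite tracial von Neumann algebra), then notes this gives a Haar-state-preserving embedding into $R$, which sits trace-preservingly inside $R^\omega$. Your bookkeeping concerns (trace-preservation, non-factor case) are handled by the same standard fact the paper implicitly uses, so the proposal is correct and essentially identical in approach.
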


\begin{proof}
  If $\G$ is of Kac type and coamenable, it follows from Ruan \cite[Proposition
  2.3]{ruan} that $(L^\infty(\G), h_\G)$ is a hyperfinite tracial von Neumann
  algebra.  In particular, this implies that there is a Haar state-preserving
  embedding of $L^\infty(\G)$ into the hyperfinite II$_1$-factor $R$.  Since $R$
  embeds trivially in $R^\omega$ in a trace-preserving way, we are done.
\end{proof}

\subsection{Quantum subgroups and a stability result for hyperlinearity}

In this section we present a new stability result for hyperlinear discrete
quantum groups (Theorem \ref{corollary:CEPsgs}).  The main conceptual tool here is a
quantization of the notion of a compact group being topologically
  generated by a pair of closed subgroups.  The results of this section will be
applied to specific examples in the next section.

Let $\G$ be a compact quantum group and $\G_1 \le \G$ a quantum subgroup (given
by a surjective $\ast$-homomorphism of Woronowicz C$^\ast$-algebras $\pi:C^u(\G)
\to C^u(\G_1)$).  Recall that any representation $u$ of $\G$ induces a
representation $u^{\G_1} := (\iota \otimes \pi)u$ of the quantum subgroup
$\G_1$.  When considering spaces of intertwiners, note that we always have the
inclusions
\begin{align} \label{sg:inclusion}\Hom_\G(u,v) \subseteq \Hom_{\G_1}(u^{\G_1},
  v^{\G_1})
\end{align}
for any pair of representations $u,v$ of $\G$.  If, moreover, we have equality
in \eqref{sg:inclusion} for every $u,v$, then it follows that $\G$ and $\G_1$
are isomorphic compact quantum groups.  This leads us to the following quantum
analogue of a compact group being topologically generated by a pair of closed subgroups.

\begin{definition} \label{qsg} Let $\G$ be a compact quantum group and
  $\G_1,\G_2 \le \G$ a pair of quantum subgroups.  We say that $\G$ is
  \textit{topologically generated by $\G_1$ and $\G_2$} (and write $\G = \langle
  \G_1,\G_2\rangle$) if
  \[\Hom_\G(u,v) = \Hom_{\G_1}(u^{\G_1}, v^{\G_1}) \cap \Hom_{\G_2}(u^{\G_2},
  v^{\G_2})\] for every pair of finite dimensional unitary representations $u,v$
  of $\G$.
\end{definition}

The following proposition shows that the condition $\G = \langle
\G_1,\G_2\rangle$ can be characterized purely in terms of a relation between the
Haar states on $\G_1, \G_2,$ and $\G$. If $v$ is a representation of $\G$ we
denote $\Fix(v) = \Hom_\G(1,v)$ the space of fixed vectors of $v$.

\begin{proposition} \label{proposition:Haar} Let $C$ be a class of
  representations of $\G$ such that any irreducible representation of $\G$ is
  equivalent to a subrepresentation of some element of $C$. Let $\G_1,\G_2 \le
  \G$. Denote $h = h_\G$ the Haar state of $\G$ and $h_i = h_{\G_i} \circ \pi_i$
  the state on $C^u(\G)$ induced by the Haar state of $\G_i$. Then the following
  conditions are equivalent.
  \begin{enumerate}
  \item \label{sg} $\G = \langle \G_1,\G_2 \rangle$.
  \item \label{sgFix} $\Fix(v^{\G_1}) \cap \Fix(v^{\G_2}) = \Fix(v)$ for all $v
    \in C$.
  \item \label{sgHaar} On $\Pol(\G)$, we have $h = \lim_{k \to \infty} (h_1\star
    h_2)^{\star k}$ (pointwise).
  \end{enumerate}
\end{proposition}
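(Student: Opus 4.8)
The plan is to prove the two equivalences $(1)\Leftrightarrow(2)$ and $(2)\Leftrightarrow(3)$ separately: the first is a statement about the representation category of $\G$, and the second translates it into the language of Haar states. Throughout I will use the standard fact that for a finite-dimensional unitary representation $v$ of a compact quantum group the operator $(\iota\otimes h)(v)$ is the orthogonal projection of the carrier space onto $\Fix(v)$. Applying this to the subgroup $\G_i$ and to $v^{\G_i}=(\iota\otimes\pi_i)v$, and using $h_i=h_{\G_i}\circ\pi_i$, I get that $P_{v,i}:=(\iota\otimes h_i)(v)=(\iota\otimes h_{\G_i})(v^{\G_i})$ is the orthogonal projection onto $\Fix(v^{\G_i})$. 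A preliminary observation I will reuse is the inclusion $\Fix(v)\subseteq\Fix(v^{\G_1})\cap\Fix(v^{\G_2})$, which is \eqref{sg:inclusion} with $u=1$.

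For $(1)\Leftrightarrow(2)$, the implication $(1)\Rightarrow(2)$ is immediate on taking $u=1$, since $\Hom_\G(1,v)=\Fix(v)$ and $\Hom_{\G_i}(1,v^{\G_i})=\Fix(v^{\G_i})$. For the converse I would first upgrade $(2)$ from $C$ to \emph{all} finite-dimensional unitary representations. This rests on two stability properties of the condition ``$\Fix(v)=\Fix(v^{\G_1})\cap\Fix(v^{\G_2})$'': it passes to direct sums, because all three spaces split along $H_{v_1}\oplus H_{v_2}$; and it passes to subrepresentations, by writing $v\cong w\oplus w'$ and comparing the two sides summand by summand, the preliminary inclusion forcing equality in each summand once it holds in total. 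Since every irreducible is a subrepresentation of some element of $C$ and every representation is a direct sum of irreducibles, $(2)$ then holds for every $v$. Finally I would invoke Frobenius reciprocity $\Hom_\G(u,v)\cong\Fix(\bar u\otimes v)$, realized by the \emph{same} linear isomorphism $\Hom(H_u,H_v)\cong\bar H_u\otimes H_v$ for $\G$ and for each $\G_i$ (the duality vector $\sum_j e_j\otimes e_j$ is fixed for every quantum group in the Kac setting). Since $(\bar u)^{\G_i}=\overline{u^{\G_i}}$, this identifies $\Hom_{\G_1}(u^{\G_1},v^{\G_1})\cap\Hom_{\G_2}(u^{\G_2},v^{\G_2})$ with $\Fix\bigl((\bar u\otimes v)^{\G_1}\bigr)\cap\Fix\bigl((\bar u\otimes v)^{\G_2}\bigr)$ and $\Hom_\G(u,v)$ with $\Fix(\bar u\otimes v)$; applying the upgraded $(2)$ to $w=\bar u\otimes v$ gives $(1)$.

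For $(2)\Leftrightarrow(3)$, the key computation is that for every finite-dimensional unitary representation $v$,
\[
\bigl(\iota\otimes(h_1\star h_2)^{\star k}\bigr)(v)=(P_{v,1}P_{v,2})^k,
\]
which I would prove by induction from the representation identity \eqref{rep}, namely $(\iota\otimes\Delta)v=v_{12}v_{13}$: a single application yields $(\iota\otimes h_1\otimes h_2)(v_{12}v_{13})=P_{v,1}P_{v,2}$, and the general case follows by coassociativity. Letting $k\to\infty$ and applying von Neumann's alternating projection theorem (norm convergence, as the carrier space is finite dimensional), the right-hand side tends to the orthogonal projection onto $\Fix(v^{\G_1})\cap\Fix(v^{\G_2})$, while $(\iota\otimes h)(v)$ is the projection onto $\Fix(v)$. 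Since the matrix elements of the representations in $C$ span $\Pol(\G)$ (every irreducible being a subrepresentation of some element of $C$), condition $(3)$ is equivalent, by linearity, to the equality of these two projections for every $v\in C$, that is, to $\Fix(v)=\Fix(v^{\G_1})\cap\Fix(v^{\G_2})$ for all $v\in C$, which is exactly $(2)$.

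I expect the crux to be the convolution-to-projection identity together with the passage to the limit via von Neumann's theorem, since this is where the quantum-group combinatorics (the representation property and the definition $h_i=h_{\G_i}\circ\pi_i$) meets operator-theoretic convergence. The categorical direction $(2)\Rightarrow(1)$ is more routine, but care is needed to verify that the Frobenius reciprocity isomorphism is genuinely compatible with restriction to the quantum subgroups $\G_i$, so that intersections of intertwiner spaces correspond precisely to intersections of fixed-point spaces.
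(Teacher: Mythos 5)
Your proposal is correct and follows essentially the same route as the paper: the implication \eqref{sgFix}$\Rightarrow$\eqref{sg} via stability of the fixed-point equality under direct sums and subrepresentations plus Frobenius reciprocity, and the equivalence \eqref{sgFix}$\Leftrightarrow$\eqref{sgHaar} via the identity $(\iota\otimes(h_1\star h_2)^{\star k})(v)=(P_{v,1}P_{v,2})^k$ combined with von Neumann's alternating projection theorem. Your extra care about the compatibility of the Frobenius isomorphism with restriction to $\G_1,\G_2$ (implicit in the paper's ``and similarly for the restrictions'') is a welcome, and correct, elaboration.
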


\begin{proof}
  \eqref{sg}$\implies$ \eqref{sgFix} is clear. 
  
 \eqref{sgFix}$\implies$
  \eqref{sg}. We have $\Fix(v\oplus w) = \Fix(v) \oplus \Fix(w)$ and, if $w$ is
  the restriction of $v$ on a subspace $K \subset H$, $\Fix(w) = \Fix(v) \cap
  K$. Hence the property $\Fix(v^{\G_1}) \cap \Fix(v^{\G_2}) = \Fix(v)$ holds
  for any finite dimensional subrepresentation of $v$. \eqref{sg} now follows by Frobenius
  reciprocity (see \cite[Proposition 3.4]{wo2}): we have indeed $\Hom_\G(u,v)
  \simeq \Fix(v\otimes \bar u)$ and similarly for the restrictions to $\G_1$,
  $\G_2$.

  \eqref{sgFix}$\implies$ \eqref{sgHaar}.  Let $u \in \mc B(H) \otimes
  C^u(\G)$ be a finite dimensional unitary representation of $\G$ and consider
  the operators $P = (\iota \otimes h)(u)$ and $P_i = (\iota \otimes h_i)(u)$
  $(i=1,2)$ in $\mc B(H)$.  Then $P, P_1, P_2$ are orthogonal projections with
  range equal to $\Fix u, \Fix u^{\G_1}, \Fix u^{\G_2}$, respectively.  In
  particular, $\lim_{k \to \infty}(P_1P_2)^k$ exists in $\mc B(H)$ and is the
  orthogonal projection with range equal to $\Fix (u^{\G_1}) \cap \Fix
  (u^{\G_2})$. As a result $P = \lim_{k \to \infty}(P_1P_2)^k$.  But since
  \begin{equation} \label{reduction} (P_1P_2)^k = (\iota \otimes (h_1\star
    h_2)^{\star k})(u),
  \end{equation}
  we conclude that $ h = \lim_{k \to \infty} (h_1\star h_2)^{\star k} $ on every
  matrix element of every finite dimensional unitary representation of $\G$.
  This proves the assertion.
  
    \eqref{sgHaar}$\implies$ \eqref{sgFix}. Similarly,
  \eqref{reduction} shows that $P = \lim_{k \to \infty}(P_1P_2)^k$ hence $\Fix
  (u^{\G_1}) \cap \Fix (u^{\G_2}) = Fix (u)$.
\end{proof}

\begin{remark}\label{remark:inner-faithfulness}
  At this point it is worthwhile pointing out the connection between our notion
  of topological generation by subgroups and the concept of an inner faithful
  representation of a Woronowicz C$^\ast$-algebra. Let $\G$ be a compact quantum
  group and $B$ a unital C$^\ast$-algebra. Recall that a $*$-homomorphism
  $\alpha : C^u(\G) \to B$ is {\em inner faithful} if $\Ker\alpha$ does not contain
  any non-zero Hopf $*$-ideal. Equivalently, for any factorization $\alpha =
  \tilde \alpha \circ \pi$ with $\pi : C^u(\G) \to C^u(\H)$, a surjective
  morphism of Woronowicz $C^*$-algebras, we have in fact that $\pi$ is an
  isomorphism. More generally, the {\em Hopf image} of $\alpha$ is the
  ``biggest'' quantum subgroup $(\H,\pi)$ of $\G$ such that $\alpha$ factors
  through $\pi : C^u(\G) \to C^u(\H)$, cf. \cite{BaBi}, and $\alpha$ is inner
  faithful iff its Hopf image is $(\G,\iota)$.

  With this terminology, it follows that $\G$ is topologically generated by
  $(\H_1,\pi_1)$, $(\H_2,\pi_2)$ iff $\alpha := (\pi_1\otimes\pi_2) \circ \Delta
  : C^u(\G) \to C^u(\H_1)\otimes C^u(\H_2)$ is inner faithful. Indeed, by
  \cite[Theorem~8.6]{BaBi} $\alpha$ is inner faitful iff $\Fix_\G(v) =
  \Fix(v^\alpha)$ for all representations $v$ of $\G$, where $v^\alpha =
  v^{\H_1}_{12}v^{\H_2}_{13}$. Then we have
  \begin{align*}
    \xi \in \Fix(v^\alpha) ~~~ &\Leftrightarrow ~~~
    v^{\H_1*}_{12} (\xi\otimes 1\otimes 1) = 
    v^{\H_2}_{13}(\xi\otimes 1\otimes 1)  \\
    & \Leftrightarrow ~~~ v^{\H_1} (\xi\otimes 1) = \xi\otimes 1=
    v^{\H_2}(\xi\otimes 1)
  \end{align*}
  so that $\Fix(v^\alpha) = \Fix(v^{\H_1}) \cap \Fix(v^{\H_2})$.

  More generally, we say that a quantum subgroup $(\H,\pi)$ is topologically
  generated by $(\H_1,\pi_1)$ and $(\H_2,\pi_2)$ if it is the Hopf image of
  $\alpha = (\pi_1\otimes\pi_2) \circ\Delta$. If $(\H,\pi)$, $(\H_1,\pi_1)$ are
  two quantum subgroups of $\G$, we say that $\H$ contains $\H_1$ if $\pi_1$
  factors through $\pi$. From the definitions, we have that if $\H = (\H,\pi)$ contains
  $\H_1$ and $\H_2$ then it also contains the subgroup generated by $\H_1$ and
  $\H_2$. Indeed, writing $\pi_1 = \rho_1\circ\pi$ and $\pi_2 = \rho_2\circ\pi$
  we have $\alpha = (\rho_1\otimes\rho_2) \circ (\pi\otimes\pi) \circ \Delta_\G
  = (\rho_1\otimes\rho_2)\circ \Delta_\H \circ \pi$ and $\pi$ is a morphism of
  Woronowicz $C^*$-algebras, hence $\H$ contains the Hopf image of $\alpha$.
\end{remark}

We now turn to an interesting corollary of Proposition \ref{proposition:Haar},
which shows that it is possible to deduce the Connes embeddability of
$L^\infty(\G)$ from the Connes embeddability of the von Neumann algebras
associated to its quantum subgroups.

\begin{theorem} \label{corollary:CEPsgs} Let $\G$ be a compact quantum group
  of Kac type and assume $\G = \langle \G_1,\G_2 \rangle $ for some pair of
  quantum subgroups $\G_1,\G_2 \le \G$.  If $\hat \G_1$ and $\hat \G_2$ are
  hyperlinear, then so is $\hat \G$.
\end{theorem}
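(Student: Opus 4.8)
The plan is to realize the Haar state $h_\G$ as a pointwise limit of convolution powers built from the states induced by the two quantum subgroups, and then to push everything through the stability machinery already set up. Concretely, I would combine the characterization $\G = \langle \G_1,\G_2\rangle \Leftrightarrow h_\G = \lim_{k\to\infty}(h_1\star h_2)^{\star k}$ from Proposition \ref{proposition:Haar} with Lemma \ref{lemma:convolution} (convolution preserves $\CEP$) and the closure properties of Lemma \ref{lemma:stabilityproperties}. Here $h_i = h_{\G_i}\circ\pi_i$ denotes the state on $\Pol(\G)$ induced by the Haar state of $\G_i$, as in Proposition \ref{proposition:Haar}.

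First I would check that $h_1,h_2 \in \CEP(\G)$. By definition, hyperlinearity of $\hat\G_i$ means $h_{\G_i}\in\CEP(\Pol(\G_i))$, i.e.\ $L^\infty(\G_i)$ is Connes embeddable. The issue is to transport this across the surjective Hopf $\ast$-algebra morphism $\pi_i:\Pol(\G)\to\Pol(\G_i)$, i.e.\ to show that pulling a $\CEP$ state back along $\pi_i$ again gives a $\CEP$ state. Since $h_i = h_{\G_i}\circ\pi_i$, one has $h_i(a^*a) = h_{\G_i}(\pi_i(a)^*\pi_i(a))$ for all $a\in\Pol(\G)$, so the assignment $\Lambda_{h_i}(a)\mapsto \Lambda_{h_{\G_i}}(\pi_i(a))$ extends to a unitary $L^2(\Pol(\G),h_i)\to L^2(\G_i)$ intertwining the GNS representations. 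Consequently $\pi_{h_i}(\Pol(\G))''$ is trace-preservingly isomorphic to the von Neumann algebra generated by $\pi_i(\Pol(\G))$, which is all of $L^\infty(\G_i)$. As $L^\infty(\G_i)$ is Connes embeddable, so is $\pi_{h_i}(\Pol(\G))''$ (this last step being an instance of Lemma \ref{lemma:stabilityproperties}\eqref{one}), and therefore $h_i\in\CEP(\G)$.

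With $h_1,h_2\in\CEP(\G)$ in hand, Lemma \ref{lemma:convolution} gives $h_1\star h_2\in\CEP(\G)$, and a trivial induction on $k$ (reapplying Lemma \ref{lemma:convolution}) shows $(h_1\star h_2)^{\star k}\in\CEP(\G)$ for every $k$. Now the hypothesis $\G=\langle\G_1,\G_2\rangle$ is exactly condition \eqref{sg} of Proposition \ref{proposition:Haar}, and its equivalence with \eqref{sgHaar} yields $h_\G = \lim_{k\to\infty}(h_1\star h_2)^{\star k}$ pointwise on $\Pol(\G)$. Being a pointwise limit of elements of $\CEP(\G)$, the Haar state $h_\G$ lies in $\CEP(\G)$ by Lemma \ref{lemma:stabilityproperties}\eqref{four}; that is, $\hat\G$ is hyperlinear, as claimed.

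The only step requiring genuine care is the first: the stability of $\CEP$ under pullback of the Haar state along $\pi_i$, which amounts to recognizing $\pi_{h_i}(\Pol(\G))''$ as a trace-preserving copy of $L^\infty(\G_i)$. This is routine but is the one place where the Kac (trace) assumption and the GNS picture are genuinely used; once it is established, the remainder is a direct assembly of Proposition \ref{proposition:Haar}, Lemma \ref{lemma:convolution} and Lemma \ref{lemma:stabilityproperties}\eqref{four}.
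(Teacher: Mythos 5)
Your proposal is correct and is essentially identical to the paper's proof: establish $h_1,h_2\in\CEP(\G)$ via the observation that the GNS construction for $h_i=h_{\G_i}\circ\pi_i$ reproduces $(L^\infty(\G_i),h_{\G_i})$, then apply Lemma \ref{lemma:convolution} iteratively and conclude with Proposition \ref{proposition:Haar} and Lemma \ref{lemma:stabilityproperties}\eqref{four}. The only difference is that you spell out the GNS factorization step in more detail than the paper does (which states it in one sentence), so there is nothing to correct.
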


\begin{proof}
  Consider the Haar states $h_{\G_1}$ and $h_{\G_2}$, and the associated states
  $h_1$, $h_2$ on $\Pol(\G)$.  Since the GNS construction for $h_i$ yields
  $(L^\infty(\G_i),h_{\G_i})$, which is Connes embeddable by assumption, we
  conclude that $h_1,h_2 \in \CEP(\G)$.  By Lemma \ref{lemma:convolution} $(h_1
  \star h_2)^{\star k} \in \CEP(\G)$ for all $k \in \N$.  Since $\G = \langle
  \G_1,\G_2 \rangle $, an application of Proposition \ref{proposition:Haar} and
  \ref{lemma:stabilityproperties}\eqref{four} shows that $h_{\G} = \lim_{k \to
    \infty} (h_1\star h_2)^{\star k}$ belongs to $\CEP(\G)$.
\end{proof}

\begin{remark} \label{rem:explicit}
An examination of Proposition \ref{proposition:Haar}, Theorem \ref{corollary:CEPsgs} and their proofs shows that under the above assumptions, one can build matricial microstates for generators of $L^\infty(\G)$ using matricial microstates for elements of  tensor products of $L^\infty(\G_1)$ and $L^\infty(\G_2)$ (which, by assumption, are known to exist!).  To see this, note that by a standard ultraproduct argument along the lines of \cite[Prop. 3.3]{MR2465797}, it suffices to exhibit  a Haar-state-preserving embedding of $L^\infty(\G)$ into a tracial ultraproduct of tensor products of the von Neumann algebras $(L^\infty(\G_1),h_{\G_1})$ and $(L^\infty(\G_2),h_{\G_2})$.

  To this end, for each $k \in\N$, let $M_k$ be the finite von Neumann algebra $(L^\infty(\G_1) \overline{\otimes} L^\infty(\G_2))^{\overline{\otimes}k}$ equipped with the faithful normal trace-state $\tau_k := (h_{\G_1} \otimes h_{\G_2})^{\otimes k}$.  Let $\omega$ be a fixed free ultrafilter on $\N$ and consider the tracial ultraproduct \[M = \prod_{k \in \N}^\omega(M_k, \tau_k) := \Big(\prod_{k \in \N}^{\ell^\infty}(M_k, \tau_k)\Big) /I_\omega, \] where $I_\omega = \{(x_k)_{k \in \N} \in \prod_{k \in \N}^{\ell^\infty}(M_k, \tau_k) \ : \ \lim_{k \to \omega}\tau_k(x_k^*x_k) = 0  \}$.  Note that $M$ is a finite von Neumann algebra with  faithful normal tracial state $\tau_{\omega}(x) = \lim_{k \to \omega} \tau_k(x_k)$, where $x = (x_k)_{k \in \N}^{^\centerdot} \in M$ denotes the equivalence class of $(x_k)_{k \in \N} \in  \prod_{k \in \N}^{\ell^\infty}(M_k, \tau_k)$.  

Denote by $\sigma_k:\Pol(\G) \to M_k$ the unital $\ast$-homomorphism \[\sigma_k(x) =  \big((\pi_{h_{\G_1}} \circ \pi_1) \otimes (\pi_{h_{\G_2}} \circ \pi_2)\big)^{\otimes k} \circ \Delta^{2k-1}(x) \qquad (x \in \Pol(\G)),  \] where  $\Delta^r := (\iota \otimes \Delta) \circ \Delta^{r-1}:\Pol(\G) \to Pol(\G)^{\otimes (r+1)}$ is the $r$-fold iterated coproduct,  $\pi_i:\Pol(\G) \to \Pol(\G_i)$ is the surjective $\ast$-homomorphism identifying $\G_i$ as a quantum subgroup of $\G$, and $\pi_{h_{\G_i}}:\Pol(\G_i) \to L^\infty(\G_i)$ is the GNS representation associated to $h_{\G_i}$.  
Since  $\G = \langle \G_1, \G_2 \rangle$ by assumption, we have $h_\G = \lim_{k \to \infty}\tau_k \circ \sigma_k$ and therefore the $\ast$-homomorphism
\[\sigma:(\Pol(\G), h_\G) \to (M,\tau_\omega); \qquad \sigma(x) = (\sigma_k(x))_{k \in \N}^{^\centerdot} \qquad (x \in \Pol(\G))\] is trace-preserving and  extends uniquely to a trace-preserving normal injective $\ast$-homomorphism $\sigma:(L^\infty(\G), h_\G) \to (M,\tau_\omega)$. \end{remark} 

\section{The Connes embedding property for $L^\infty(O_N^+)$ and $L^\infty(U_N^+)$}
\label{section:mainresult}

In this section we apply the general theory of the previous sections to study
the hyperlinearity of the discrete quantum groups dual to $O_N^+$ and $U_N^+$,
$N \ge 2$.  In particular, we prove that $O_N^+$ is topologically generated by
certain canonical pairs of quantum subgroups of lower rank.  The results of this
section may be of independent interest, particularly with respect to the problem
of classifying \textit{all quantum subgroups} $O_N \le \G \le O_N^+$ (see Section \ref{section:apps} for more on this).  

Below we
will consider the following list of quantum subgroups of $O_N^+$. Recall that we
denote $u \in B(H)\otimes C^u(O_N^+)$ the fundamental representation of $O_N^+$,
with $H = \C^N$, and let us also put $S_1 = \{\xi \in \R^N \mid \|\xi\|=1\}
\subset H$.

\begin{enumerate}
\item The classical orthogonal group $O_N \le O_N^+$, given by the Woronowicz
  C$^*$-morphism $\pi_{O_N} : C^u(O_N^+) \to C(O_N)$ whose kernel is generated by
  commutators.
\item The classical permutation group $\mathfrak{S}_N \le O_N^+$, given by the
  Woronowicz C$^*$-morphism $\pi_{\mathfrak{S}_N} : C^u(O_N^+) \to
  C(\mathfrak{S}_N)$ whose kernel is generated by the commutators together with
  the elements $(u_{ij}-u_{ij}^2)_{1 \le i, j \le N}$.
\item The free product quantum subgroups $O_a^+ \hatfree O_b^+\le O_N^+$ for
  $a+b=N$, given by the Woronowicz C$^*$-morphism $\pi_{a,b}:C^u(O_N^+) \to
  C^u(O_a^+ \hatfree O_b^+)$ which sends the $a \times a$ upper left (resp. $b
  \times b$ lower right) corner of the fundamental representation of $O_N^+$ to
  the fundamental representation of $O^+_a$ (resp. $O^+_b$), and all other
  entries to $0$.
\item The quantum stabilizer subgroups $O_{N-1}^{+,\xi}\le O_N^+$ for $\xi\in
  S_1$, given by the Woronowicz C$^*$-morphisms $\pi_\xi : C^u(O_N^+) \to
  C^u(O_{N-1}^{+,\xi})$ obtained by completing $\xi$ into an orthonormal basis and
  sending the corresponding generator $u_{11}$ to $1$.  Note that
  $O_{N-1}^{+,\xi} \simeq O_{N-1}^+$ for all $\xi$.
\end{enumerate}

The main theorems of this section are as follows.

\begin{theorem} \label{theorem:N>3} Let $N \ge 4$, then the following assertions
  are true.
  \begin{enumerate}
  \item $O_N^+ = \langle O_N, O_{N-1}^{+,\xi} \rangle$ for each $\xi \in S_1$.
  \item For any pair of linearly independent vectors $\xi_1, \xi_2 \in S_1$, $O_N^+ = \langle O_{N-1}^{+,\xi_1},
    O_{N-1}^{+,\xi_2} \rangle$.
  \end{enumerate}
\end{theorem}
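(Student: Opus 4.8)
The plan is to verify condition \eqref{sgFix} of Proposition \ref{proposition:Haar} rather than checking Definition \ref{qsg} directly. Since the fundamental representation $u$ of $O_N^+$ is self-conjugate, every irreducible representation of $O_N^+$ is a subrepresentation of some tensor power $u^{\otimes k}$, so I may take as my class $C = \{u^{\otimes k} : k \in \N_0\}$. It then suffices to establish, for every $k$, the equalities of fixed spaces
\[\Fix(u^{\otimes k})^{O_N} \cap \Fix(u^{\otimes k})^{O_{N-1}^{+,\xi}} = \Fix(u^{\otimes k})^{O_N^+} \quad\text{(part 1)},\]
\[\Fix(u^{\otimes k})^{O_{N-1}^{+,\xi_1}} \cap \Fix(u^{\otimes k})^{O_{N-1}^{+,\xi_2}} = \Fix(u^{\otimes k})^{O_N^+} \quad\text{(part 2)}.\]
In both cases the inclusion $\supseteq$ is automatic from \eqref{sg:inclusion}, since the two quantum groups on the left are subgroups of $O_N^+$; the whole content is the reverse inclusion.

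The first concrete step is to record explicit diagrammatic bases for the three fixed spaces involved. By Theorem \ref{thm_invariants}, $\Fix(u^{\otimes k})^{O_N^+} = TL_N(k) = \Span\{T_p : p \in NC_2(k)\}$, with the $T_p$ linearly independent for $N \ge 2$; by Brauer's theorem $\Fix(u^{\otimes k})^{O_N} = \Span\{T_p : p \in P_2(k)\}$ runs over all pair partitions. For the stabilizer, take $\xi = e_1$ without loss of generality: then $O_{N-1}^{+,\xi}$ fixes $e_1$ and acts as $O_{N-1}^+$ on $\Span(e_2,\ldots,e_N)$, so decomposing $(\C^N)^{\otimes k}$ by the set $S$ of tensor legs equal to $e_1$ gives
\[\Fix(u^{\otimes k})^{O_{N-1}^{+,\xi}} = \bigoplus_{S \subseteq \{1,\ldots,k\}} (e_1)^{\otimes S} \otimes TL_{N-1}(S^c),\]
where $TL_{N-1}(S^c)$ denotes the Temperley-Lieb diagrams on the legs of $S^c$ valued in $\Span(e_2,\ldots,e_N)$. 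In words, $\Fix^{O_{N-1}^{+,\xi}}$ is spanned by partially decorated non-crossing diagrams whose legs in $S$ carry the fixed vector $\xi$ and whose remaining legs are joined by a non-crossing pairing avoiding the index $1$.

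For part (1), I would write a candidate $x = \sum_{p \in P_2(k)} c_p T_p$ using $O_N$-invariance, expand in the standard basis of $(\C^N)^{\otimes k}$, and extract for each $S$ the slice of $x$ supported on those multi-indices equal to $1$ exactly on $S$. Since each contributing $j$ is constant on the blocks of $p$, only partitions for which $S$ is a union of blocks contribute, and this slice equals $\sum_{q \in P_2(S^c)} \big( \sum_{p_S \in P_2(S)} c_{p_S \cup q} \big) T_q^{(N-1)}$. By the displayed description of $\Fix^{O_{N-1}^{+,\xi}}$, invariance of $x$ under the stabilizer is exactly the requirement that each slice lie in $TL_{N-1}(S^c)$, i.e.\ that the total coefficient of every crossing $q$ vanishes; the goal is to bootstrap these conditions into the statement that $x$ already lies in $TL_N(k)$. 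I expect this to be the main obstacle: because the $T_p$ with $p \in P_2(k)$ become linearly dependent once $k$ exceeds $N$ (the antisymmetrizer relations), the $c_p$ are not uniquely determined and vanishing cannot simply be read off. The argument must therefore proceed inductively---peeling off an outermost cap in $k$, or invoking the $O_{N-1}^+$-structure appearing in the slices---while staying inside the range $N \ge 2$ of Theorem \ref{thm_invariants} for all the smaller quantum groups that arise.

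Part (2) is handled by the same slice analysis carried out simultaneously for two decorations. Normalizing $\xi_1 = e_1$, a vector fixed by $O_{N-1}^{+,\xi_1}$ is a combination of $\xi_1$-decorated non-crossing diagrams; imposing in addition invariance under $O_{N-1}^{+,\xi_2}$ with $\xi_2$ linearly independent from $\xi_1$ restricts the admissible decorations to those compatible with the joint stabilizer of the plane $\Span(\xi_1,\xi_2)$, whose action on the orthocomplement is that of $O_{N-2}^+$. Linear independence of $\xi_1,\xi_2$ is precisely what prevents any crossing pairing from being simultaneously compatible with both decoration patterns, and as in part (1) the conclusion should be that only non-crossing diagrams survive the intersection. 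This is where the hypothesis $N \ge 4$ is genuinely needed: it guarantees $N - 2 \ge 2$, so that the orthocomplement of $\Span(\xi_1,\xi_2)$ still supports a non-degenerate $O_{N-2}^+$ to which Theorem \ref{thm_invariants} applies.
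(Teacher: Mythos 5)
Your reduction via Proposition \ref{proposition:Haar}\,\eqref{sgFix} and your diagrammatic description of the three fixed spaces are correct, and they coincide with the paper's own starting point (your decomposition of $\Fix_k^{\xi}$ over decoration sets $S$ is exactly Lemma \ref{lem_fix_xi}). But the proposal stops precisely where the proof would have to begin: you yourself identify the obstacle --- the spanning set $\{T_p : p\in P_2(k)\}$ of $\Fix_k^{O_N}$ is linearly dependent, so the coefficients $c_p$ are not well defined and ``each slice lies in $TL_{N-1}(S^c)$'' cannot be read off as vanishing of crossing coefficients --- and then leave it unresolved (``the argument must therefore proceed inductively\dots''), with no induction scheme, base case, or cap-removal step actually formulated. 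Part (2) rests on the unproven assertion that linear independence of $\xi_1,\xi_2$ ``prevents any crossing pairing from being simultaneously compatible with both decoration patterns.'' So this is a plan with the central argument missing, not a proof.

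The paper escapes your obstacle by expanding in the opposite direction. First, it uses the classical fact that two distinct copies of $O_{N-1}$ inside $O_N$ generate $O_N$ (proved with rotations); consequently an element of $\Fix_k^{\xi_1}\cap\Fix_k^{\xi_2}$ is automatically $O_N$-invariant, and both parts of the theorem collapse to the single condition $\Fix_k^{O_N}\cap\Fix_k^{\xi}=\Fix_k$ (Proposition \ref{diff}), so part (2) never needs a separate analysis. Then, instead of writing $x$ in the overcomplete family $\{T_p: p \in P_2(k)\}$, one writes $x\in\Fix_k^{\xi}$ as $x=\sum\lambda_p T_p(\xi^{\otimes s})$ over $p\in NC_{2,1}(k)$: these vectors \emph{are} linearly independent for $N\ge 3$ by Lemma \ref{lem_fix_xi}, so the $\lambda_p$ are unique, and $O_N$-invariance says exactly that the map $\Tt_\lambda:\xi\mapsto\sum\lambda_p T_p(\xi^{\otimes s})$ is constant on $S_1$. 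Differentiating at $e_1$ in the direction $e_2$ (legitimate by transitivity of $O_N$ on orthonormal pairs) converts constancy into the statement that the vectors $T_p(S(e_1\otimes\cdots\otimes e_1\otimes e_2))$, $p\in NC_{2,1}(k)\setminus NC_2(k)$, are linearly independent; this is then verified by a block-triangular comparison with the auxiliary family $y_{p,i}=T_p^3(e_{i_1}\otimes\cdots\otimes e_{i_s})$, $i_l\in\{1,2\}$, whose internal sums run over indices $3,\dots,N$. That is where $N\ge 4$ enters --- via Temperley--Lieb linear independence in dimension $N-2\ge 2$, close to your intuition but through a combinatorial independence argument rather than a subgroup $O_{N-2}^+$. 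The two ideas your proposal lacks are exactly the classical generation fact that merges parts (1) and (2), and the differentiation trick that replaces your ill-posed coefficient analysis with a well-posed linear-independence problem.
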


\begin{theorem} \label{theorem:N=4} 
   Let $n \ge 2$ be a non-negative integer.  Then $O_{2n}^+ = \langle
   \mathfrak{S}_{2n}, O_n^+ \hatfree O_n^+ \rangle$.
\end{theorem}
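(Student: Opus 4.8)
The plan is to verify the topological generation condition through its fixed-vector form, Proposition~\ref{proposition:Haar}\eqref{sgFix}. Since the fundamental representation $u$ is self-conjugate and generates the representation category of $O_{2n}^+$, every irreducible representation is a subrepresentation of some tensor power $u^{\otimes k}$, so I may take $C=\{u^{\otimes k}:k\ge 0\}$. The task then reduces to proving, for every $k$, that
\[
\Fix\big((u^{\otimes k})^{\mathfrak{S}_{2n}}\big)\cap\Fix\big((u^{\otimes k})^{O_n^+\hatfree O_n^+}\big)=\Fix(u^{\otimes k}),
\]
where by Theorem~\ref{thm_invariants} the right-hand side is $TL_{2n}(k)=\Span\{T_p:p\in NC_2(k)\}$, and the $T_p$ are linearly independent because $2n\ge 4\ge 2$. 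The inclusion $\supseteq$ is immediate from \eqref{sg:inclusion}; the substance is the reverse inclusion.

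Next I would make the two restricted fixed spaces explicit. Write $H=\C^{2n}=H_1\oplus H_2$ with $H_i=\C^n$. The restriction to $\mathfrak{S}_{2n}$ is the permutation representation, so the classical invariant theory of the symmetric group identifies $\Fix((u^{\otimes k})^{\mathfrak{S}_{2n}})$ with the span of the set-partition (orbit) vectors indexed by partitions of $\{1,\dots,k\}$ into at most $2n$ blocks. For the free product I decompose $H^{\otimes k}=\bigoplus_c H_{c(1)}\otimes\cdots\otimes H_{c(k)}$ over colorings $c:\{1,\dots,k\}\to\{1,2\}$ and use that the Haar state of $O_n^+\hatfree O_n^+$ is the free product $h_1*h_2$, together with the fact that a tensor product of fundamental representations of the two $O_n^+$ factors admits invariant vectors only by pairing equal-colored legs (the cup of a factor with itself) in a non-crossing way. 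This shows that $\Fix((u^{\otimes k})^{O_n^+\hatfree O_n^+})$ is spanned by the two-colored non-crossing pair diagrams $T_{p,c}$, where $p\in NC_2(k)$, the coloring $c$ is constant on the pairs of $p$, and $T_{p,c}$ is the restriction of $T_p$ to the summand $c$. Crucially $\sum_c T_{p,c}=T_p$, and for $n\ge 2$ the family $\{T_{p,c}\}$ is linearly independent, since inside each color block it reduces to the linear independence of Temperley--Lieb diagrams for $O_n^+$ (Theorem~\ref{thm_invariants}).

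With these descriptions in hand, take $\xi$ in the intersection and expand $\xi=\sum_{p,c}\lambda_{p,c}T_{p,c}$ in this linearly independent basis; it suffices to show that $\lambda_{p,c}$ depends only on $p$, for then $\xi=\sum_p\mu_pT_p\in TL_{2n}(k)$ and we are done. Reading off the coefficient of each basis vector $e_{j_1}\otimes\cdots\otimes e_{j_k}$, the $\mathfrak{S}_{2n}$-invariance of $\xi$ becomes the requirement that $\sum_{p\in NC_2(k),\,p\le q}\lambda_{p,\bar c}$ depend only on the kernel $q$ of the index tuple and not on the compatible block-coloring $\bar c$. Specializing $q$ to a non-crossing pairing $p$ itself, the only element of $NC_2(k)$ with $p'\le p$ being $p$, immediately yields $\lambda_{p,c}=\lambda_{p,c'}$ for all colorings $c,c'$ that are realizable as the coloring of a tuple with kernel $p$, i.e.\ with at most $n$ pairs of each color.

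The main obstacle is to upgrade this from realizable colorings to all colorings when $k>2n$, so that a pairing may carry more than $n$ pairs of a single color, which cannot be realized with distinct block values. I expect to handle this by a downward induction on a ``color-balance defect'' of $c$: merging several equal-colored pairs of $p$ into larger blocks produces a coarser partition $q$ for which the offending coloring becomes realizable, and the corresponding instance of the invariance relation expresses $\lambda_{p,c}$ in terms of coefficients of strictly smaller defect that are known by induction. Checking that these merging relations close up consistently, and that no genuinely new invariant survives, is the combinatorial heart of the argument; it is also precisely where the hypothesis $n\ge 2$ (hence $2n\ge 4$) is used, both for the linear independence underlying the whole coefficient bookkeeping and to guarantee enough room in each color block to realize the colorings needed in the induction.
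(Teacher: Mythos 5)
Your setup is sound and, up to a point, parallels the paper's reduction: you correctly reduce via Proposition~\ref{proposition:Haar} to the fixed spaces of $u^{\otimes k}$, correctly describe $\Fix((u^{\otimes k})^{\mathfrak{S}_{2n}})$ by orbit sums over set partitions and $\Fix((u^{\otimes k})^{O_n^+\hatfree O_n^+})$ by the two-colored non-crossing pairings $T_{p,c}$ (linearly independent for $n\ge 2$, with $\sum_c T_{p,c}=T_p$), and correctly reduce the theorem to showing that $\lambda_{p,c}$ depends only on $p$. The specialization of the kernel $q$ to $p$ itself also correctly gives $\lambda_{p,c}=\lambda_{p,c'}$ for \emph{realizable} colorings. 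The genuine gap is the remaining case of non-realizable colorings (more than $n$ pairs of one color), which is unavoidable: such colorings exist for every $p$ once $k>2n$, and for $k>4n$ \emph{every} coloring is non-realizable, so the "base case" of your induction is then empty. Moreover the mechanism you propose for the induction does not do what you claim. If $q\ge p$ is obtained by merging equal-colored pairs of $p$ and $\bar c$ is a realizable block-coloring of $q$ inducing $c$ on $p$, then the invariance relation is an equality between the sums $\sum_{p'\le q}\lambda_{p',\bar c|_{p'}}$ for two colorings of $q$; and every term on the $\bar c$ side has exactly the same color profile (hence the same defect) as $(p,c)$, because the number of color-$i$ pairs of any non-crossing re-pairing $p'\le q$ equals half the total size of the color-$i$ blocks of $q$. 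So no single relation "expresses $\lambda_{p,c}$ in terms of coefficients of strictly smaller defect": at best it says that a sum of many same-defect unknowns, over all non-crossing re-pairings of the blocks of $q$, equals a quantity controlled by induction. You are then left with proving that this linear system (as $q$ and $\bar c$ vary) has full rank on the span of the same-defect basis vectors, and nothing in the proposal indicates how to do this. That rank statement is precisely the content of the theorem, so as written the proof is incomplete at its essential step.

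It is worth contrasting this with how the paper sidesteps the combinatorics entirely. There, one passes to the unitary picture ($\Fix(u_{2k})=\Fix(w_{2k})$ for $U_{2n}^+$, and likewise for the free products), notes that free-product invariance implies invariance under the classical block subgroup $U_n\times U_n$, and invokes the classical generation fact that $\mathfrak{S}_{2n}$ and $U_n\times U_n$ generate $U_{2n}$; this upgrades your weak symmetric-group constraint to full $U_{2n}$-invariance. Then Lemma~\ref{Ch} finishes: by Schur--Weyl duality a $U_{2n}$-invariant corresponds to a linear combination of leg-permutation operators, and these commute with $h^{\otimes k}$ for \emph{every} $h\in\mc B(H)$ --- in particular with the non-unitary collapse $g:\C^{2n}\to H_1$, $e_r,e_{r+n}\mapsto e_r$ --- which kills all coefficients $\lambda_{q,i}$ with $i\ne 1^k$ in one stroke. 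Note that the unitary detour is not cosmetic: the commutant of $O_{2n}$ on $H^{\otimes k}$ is the Brauer algebra, whose elements do not commute with non-orthogonal maps like $g$, so the collapse trick would fail in your purely orthogonal setting. The most economical repair of your argument is therefore not to fight the coloring combinatorics but to insert the same upgrade: from $\mathfrak{S}_{2n}$- and block-invariance deduce $U_{2n}$-invariance, after which the coefficient problem trivializes.
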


Before proving Theorems \ref{theorem:N>3} and
\ref{theorem:N=4} we state their applications to hyperlinearity.

\begin{corollary} \label{CEPorthogonal} Let $N = 2$ or $N \ge 4$.  Then
  $\widehat{O_N^+}$ is hyperlinear.
\end{corollary}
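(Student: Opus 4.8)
The plan is to prove the statement by induction on $N$, using Theorem \ref{corollary:CEPsgs} as the inductive engine: it upgrades hyperlinearity of a pair of quantum subgroups to hyperlinearity of the ambient $O_N^+$, which is always of Kac type. The base layer of hyperlinearity is supplied by the coamenable quantum subgroups that appear, whose duals are automatically hyperlinear by Lemma \ref{lemma:coamenable}. The only delicate point is choosing the base case so as to avoid the unknown $\widehat{O_3^+}$.

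First I would record the coamenable inputs. The quantum group $O_2^+$ is coamenable and of Kac type, so Lemma \ref{lemma:coamenable} gives that $\widehat{O_2^+}$ is hyperlinear; this settles the case $N=2$ outright. Likewise, every classical compact group of Kac type, such as $O_N$, and every finite group, such as $\mathfrak{S}_N$, is coamenable, so each of $\widehat{O_N}$ and $\widehat{\mathfrak{S}_N}$ is hyperlinear by the same lemma.

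Next I would establish the base case $N=4$. This case cannot be reached through Theorem \ref{theorem:N>3}(1), since that decomposition $O_4^+ = \langle O_4, O_3^{+,\xi}\rangle$ would force knowledge of the hyperlinearity of $\widehat{O_3^+}$, which is precisely the gap being avoided. Instead I would invoke Theorem \ref{theorem:N=4} with $n=2$, giving $O_4^+ = \langle \mathfrak{S}_4, O_2^+ \hatfree O_2^+\rangle$. Here $\widehat{\mathfrak{S}_4}$ is hyperlinear because $\mathfrak{S}_4$ is finite, and $\widehat{O_2^+ \hatfree O_2^+} = \widehat{O_2^+} * \widehat{O_2^+}$ is hyperlinear because the Connes embedding property is stable under free products of tracial algebras (Lemma \ref{lemma:stabilityproperties}\eqref{three}) while $\widehat{O_2^+}$ is hyperlinear from the previous step. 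Since $O_4^+$ is of Kac type, Theorem \ref{corollary:CEPsgs} then yields that $\widehat{O_4^+}$ is hyperlinear.

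Finally, for the induction step I would treat $N \ge 5$ via Theorem \ref{theorem:N>3}(1): fixing any $\xi \in S_1$, one has $O_N^+ = \langle O_N, O_{N-1}^{+,\xi}\rangle$ with $O_{N-1}^{+,\xi} \simeq O_{N-1}^+$. Because $N \ge 5$ forces $N-1 \ge 4$, the induction hypothesis gives hyperlinearity of $\widehat{O_{N-1}^+}$, while $\widehat{O_N}$ is hyperlinear by coamenability; as $O_N^+$ is of Kac type, Theorem \ref{corollary:CEPsgs} delivers hyperlinearity of $\widehat{O_N^+}$. This completes the induction for all $N \ge 4$ and, together with the case $N=2$, the corollary. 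I expect the main obstacle to be nothing more than the correct placement of the base case $N=4$: the entire purpose of the even-index decomposition in Theorem \ref{theorem:N=4} is to seed the induction from $O_2^+$ and $\mathfrak{S}_4$ while sidestepping the still-unresolved $\widehat{O_3^+}$.
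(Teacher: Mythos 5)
Your proposal is correct and follows essentially the same route as the paper: the case $N=2$ via Lemma \ref{lemma:coamenable}, the base case $N=4$ via Theorem \ref{theorem:N=4} combined with Theorem \ref{corollary:CEPsgs} (using free product stability for $O_2^+ \hatfree O_2^+$), and induction for $N \ge 5$ via Theorem \ref{theorem:N>3}(1) and Theorem \ref{corollary:CEPsgs}. Your write-up merely makes explicit the justifications (coamenability of $O_N$ and $\mathfrak{S}_N$, Lemma \ref{lemma:stabilityproperties}\eqref{three} for the free product) that the paper leaves implicit.
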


\begin{proof}
  The hyperlinearity of $\widehat{O_2^+}$ follows from Lemma
  \ref{lemma:coamenable}.  For the case $N=4$, note that $O_4^+ =
  \langle\mathfrak S_4,O_2^+\hat{*}O_2^+ \rangle$ by Theorem
  \ref{theorem:N=4}. 
  Since $L^\infty(\mathfrak S_4)$ and $L^\infty(O_2^+\hat{*}O_2^+) =
  (L^\infty(O_2^+),h_{O_2^+})*(L^\infty(O_2^+),h_{O_2^+})$ are both Connes
  embeddable, we conclude that $\widehat{O_4^+}$ is hyperlinear by Theorem
  \ref{corollary:CEPsgs}.  Finally, the cases $N \ge 5$ follow by induction
  using Theorem \ref{theorem:N>3} and Theorem \ref{corollary:CEPsgs}.
\end{proof}

Using a structure result of Banica \cite[Th\'eor\`eme 1]{ba2}, we can easily deduce the hyperlinearity of $\widehat{U_N^+}$ from the
corresponding result for $\widehat{O_N^+}$.

\begin{theorem} \label{CEPunitary} Let $N=2$ or $N \ge 4$.  Then
  $\widehat{U_N^+}$ is hyperlinear.
\end{theorem}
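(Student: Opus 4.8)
The plan is to realize $L^\infty(U_N^+)$ as a trace-preserving subalgebra of a free product of two Connes embeddable algebras, and then invoke the stability of the Connes embedding property under free products and trace-preserving inclusions provided by Lemma \ref{lemma:stabilityproperties}. The single substantive input is Banica's free complexification description of $U_N^+$ \cite[Th\'eor\`eme 1]{ba2}: the fundamental representation $w = [w_{ij}]$ of $U_N^+$ can be modelled as $w_{ij} = z u_{ij}$, where $u = [u_{ij}]$ is the fundamental representation of $O_N^+$ and $z$ is a Haar unitary free from $\Pol(O_N^+)$.

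Concretely, I would introduce the free product $\ast$-algebra $\mathcal{A} := \Pol(O_N^+) \ast \C[\Z]$ equipped with the free product trace $\tau := h_{O_N^+} \ast \tau_\Z$, where $\tau_\Z$ is the canonical trace on $\C[\Z]$ (whose GNS von Neumann algebra is $\mc L(\Z)$), and I would denote by $z \in \C[\Z]$ the canonical unitary generator. Since $u$ is orthogonal ($u_{ij}^* = u_{ij}$ and $uu^t = u^tu = 1$), a direct computation shows that both $w := zu = [zu_{ij}]$ and its entrywise adjoint $\bar w$ are unitary over $\mathcal{A}$; hence there is a unital $\ast$-homomorphism $\Phi : \Pol(U_N^+) \to \mathcal{A}$ determined by $\Phi(w_{ij}) = z u_{ij}$. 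Banica's theorem is precisely the statement that $\Phi$ is injective and Haar-state preserving, i.e. $\tau \circ \Phi = h_{U_N^+}$; equivalently, the mixed moments of the $zu_{ij}$ reproduce the Haar state of $U_N^+$ (this is the moment incarnation of the intertwiner identity recalled in Remark \ref{rem:unitaryinvariants}).

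Granting this, the conclusion is immediate. By Corollary \ref{CEPorthogonal} we have $h_{O_N^+} \in \CEP(O_N^+)$ for $N = 2$ or $N \ge 4$, while $\tau_\Z \in \CEP(\C[\Z])$ since $\mc L(\Z)$ is abelian, hence hyperfinite, hence trivially Connes embeddable. Lemma \ref{lemma:stabilityproperties}\eqref{three} then gives $\tau = h_{O_N^+} \ast \tau_\Z \in \CEP(\mathcal{A})$. Because $\Phi$ is an injective trace-preserving $\ast$-homomorphism, it is a trace-preserving isomorphism onto the unital $\ast$-subalgebra $\Phi(\Pol(U_N^+)) \subseteq \mathcal{A}$; applying Lemma \ref{lemma:stabilityproperties}\eqref{one} to this subalgebra and transporting the resulting embedding back through $\Phi$ yields $h_{U_N^+} \in \CEP(U_N^+)$. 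Thus $\widehat{U_N^+}$ is hyperlinear, and the admissible range of $N$ is inherited verbatim from Corollary \ref{CEPorthogonal}.

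The genuine content lies entirely in the first two steps, and even there it is light: the only nontrivial fact is Banica's identification of $U_N^+$ with the free complexification of $O_N^+$, which guarantees that the model $w_{ij} = z u_{ij}$ is faithful on moments. I expect the one point requiring care to be the verification that $\Phi$ is moment-faithful (that it is injective and $\tau$-preserving, rather than merely a well-defined homomorphism); this is exactly what \cite[Th\'eor\`eme 1]{ba2} supplies, so once it is cited the remainder is a purely formal application of Lemma \ref{lemma:stabilityproperties}.
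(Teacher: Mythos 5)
Your proposal is correct and takes essentially the same approach as the paper: both rest on Banica's free complexification theorem \cite[Th\'eor\`eme 1]{ba2} to produce a trace-preserving embedding of the algebra of $U_N^+$ into the free product of the algebra of $O_N^+$ with that of a single Haar unitary, and then conclude by the free-product and subalgebra stability of the Connes embedding property from Lemma \ref{lemma:stabilityproperties} together with Corollary \ref{CEPorthogonal}. The only cosmetic difference is that you phrase the embedding at the level of polynomial algebras, $\Phi:\Pol(U_N^+)\to\Pol(O_N^+)\ast\C[\Z]$, whereas the paper states it directly at the von Neumann algebra level, $(L^\infty(U_N^+),h_{U_N^+})\hookrightarrow(L^\infty(\mathbb T)\ast L^\infty(O_N^+),\tau\ast h_{O_N^+})$.
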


\begin{proof}
  From \cite[Th\'eor\`eme 1]{ba2}, there exists trace-preserving embedding
  $(L^\infty(U_N^+), h_{U_N^+}) \hookrightarrow (L^\infty(\mathbb T)\ast
  L^\infty(O_N^+), \tau \ast h_{O_N^+})$, where $\tau$ denotes integration with
  respect to the Haar probability measure on $\mathbb T$.  The Connes
  embeddability of $L^\infty(U_N^+)$ now follows from the Connes embeddability
  of $L^\infty(\mathbb T), L^\infty(O_N^+)$ and Lemma
  \ref{lemma:stabilityproperties} \eqref{three}.
\end{proof}

\begin{remark}
  We expect that Theorem \ref{theorem:N>3} holds when $N = 3$ (and therefore
  that $\widehat{O^+_3}, \widehat{U^+_3}$ are hyperlinear).  However, the
  following proof method seems to break down in this case. See also
  Remark~\ref{rem:roland}.
\end{remark}

The remainder of this section is devoted to proving the above quantum subgroup
generation results for $O_N^+$.

\subsection{Proofs of Theorems~\ref{theorem:N>3} and \ref{theorem:N=4}}
\label{sec_diff}

We begin by developing some tools for the proof of Theorem \ref{theorem:N>3}.

Recall that we denote $\Fix_k = \Hom_{O_N^+}(1,u^{\otimes k})$ where $u$ is the
fundamental representation of $O_N^+$, and let us denote similarly
\begin{align*}
  \Fix_k^\xi &= \Hom_{O_N^{+,\xi}}(1,u^{\otimes k}) =
  \Hom_{O_{N-1}^+}(1,(\iota\otimes\pi_\xi)(u)^{\otimes k}) \subseteq H^{\otimes k}, \\
  \Fix_k^{O_N} &= \Hom_{O_N}(1,(\iota\otimes\pi_{O_N})(u)^{\otimes k}) \subset
  H^{\otimes k}.
\end{align*}
According to Proposition~\ref{proposition:Haar}, Theorem~\ref{theorem:N>3} is
equivalent to the equalities $\Fix_k^\xi \cap \Fix_k^{O_N}$ $=$ $\Fix_k^{\xi_1}
\cap \Fix_k^{\xi_2}$ $=$ $\Fix_k$ for all $k$. Hence we start by describing the
subspaces $\Fix_k^\xi$.

Let $NC_{2,1}(k)$ be the set of non-crossing partitions of $\{1, \ldots, k\}$
consisting of blocks with cardinality at most $2$.  In what follows, a block of $p \in NC_{2,1}(k)$ with cardinality equal to $1$ will be called a {\it singleton}, and a block with cardinality equal to $2$ will be called a {\it pair}.  We also denote by $NC_{2,1}^s(k) \subset NC_{2,1}(k)$ the
subset of non-crossing partitions containing exactly $s$ singletons, so that $NC_2(k) =
NC_{2,1}^0(k)$. For $p\in NC_{2,1}(k)$ and $i$ a $k$-tuple we put $\delta^p_i =
1$ if $i_l = i_m$ for all pairs $\{l,m\} \in p$, and $\delta^p_i = 0$ else. Then
we associate to $p \in NC_{2,1}^s(k)$ a linear map $T_p : H^{\otimes s} \to
H^{\otimes k}$ as follows:
\begin{displaymath}
  T_p(\xi_1\otimes\cdots\otimes\xi_s) = \sum_{i_j = 1}^N \delta^p_i 
  (e_{i_1}\otimes \cdots\otimes \xi_1\otimes \cdots \otimes \xi_s\otimes 
  \cdots\otimes e_{i_k}),
\end{displaymath}
where we put a term $\xi_i$ at position $l$ if $\{l\}$ is the $i^{\text{th}}$
singleton in $p$, and a term $e_{i_l}$ else. In other words, $T_p$ is the usual
map associated to the pair partition (possibly with crossings) $p'\in P_2(s,k)$ obtained
from $p$ by attaching a vertical segment to each singleton. We will also denote
$T_p = T_p^1$ and consider the variant $T_p^2$ (resp. $T_p^3$) where the indices
$i_j$ range from $2$ to $N$ (resp. $3$ to $N$). Finally we denote $S :
H^{\otimes l} \to H^{\otimes l}$ the ``symmetrizing'' operators
\begin{displaymath}
  S : \xi_1\otimes\cdots\otimes\xi_l \mapsto \sum_{\sigma\in
    \mathfrak{S}_l} \xi_{\sigma(1)}\otimes\cdots\otimes\xi_{\sigma(l)}.
\end{displaymath}

\begin{lemma} \label{lem_fix_xi} Denote $v = (\iota\otimes\pi_{O_N})(u)$ the
  fundamental representation of $O_N$ and fix $\xi\in S_1$.
  \begin{enumerate}
  \item We have, for any $N \geq 2$, $k\in\N$:
    \begin{displaymath}
      \Fix_k^\xi =  
      \Span~ \{T_p(\xi^{\otimes s}) \mid p \in  NC_{2,1}^s(k), 0\le s\le k\}.
    \end{displaymath}
  \item The vectors $T_p(\xi^{\otimes s})$ for $p \in NC_{2,1}(k)$ are linearly
    independent if $N\geq 3$.
  \item We have $T_p \in \Hom_{O_N}(v^{\otimes s}, v^{\otimes k})$ for all $p
    \in NC_{2,1}^s(k)$.
  \end{enumerate}
\end{lemma}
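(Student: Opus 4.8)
The plan is to first reduce to the case $\xi = e_1$: since the entire statement is manifestly equivariant under the rotation action of $O_N \le O_N^+$ carrying $e_1$ to $\xi$, we may assume $\xi = e_1$, so that $\xi^\perp = \Span\{e_2,\dots,e_N\}$ and the indices $\{2,\dots,N\}$ defining the variant $T_p^2$ are exactly the coordinates of $\xi^\perp$. The structural observation driving everything is that $v_\xi := (\iota\otimes\pi_\xi)(u)$ decomposes as $v_\xi = 1_\xi \oplus w$, where $1_\xi$ is the trivial representation on $\C\xi$ and $w$ is the fundamental representation of $O_{N-1}^{+,\xi}\simeq O_{N-1}^+$ on $\xi^\perp\cong\C^{N-1}$: indeed $\pi_\xi(u_{11})=1$ together with unitarity of $u$ forces $\pi_\xi(u_{1j})=\pi_\xi(u_{j1})=0$ for $j\ge 2$, so $\xi$ is fixed by $v_\xi$ and the lower-right corner is $w$. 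Part (3) then needs no real work: by construction $T_p=T_{p'}$ for the pair partition $p'\in P_2(s,k)$ obtained by turning each singleton into a through-string, so $T_p$ lies in $\Span\{T_{p'}:p'\in P_2(s,k)\}=\Hom_{O_N}(v^{\otimes s},v^{\otimes k})$ by the Brauer description recalled in Section~\ref{sec_inv_theory}.

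For (1) and (2) I would compute $\Fix_k^\xi=\Fix(v_\xi^{\otimes k})$ directly from $v_\xi=1_\xi\oplus w$. Expanding the tensor power gives the $v_\xi^{\otimes k}$-invariant decomposition $H^{\otimes k}=\bigoplus_{A\subseteq\{1,\dots,k\}}(\C\xi)^{\otimes A}\otimes(\xi^\perp)^{\otimes A^c}$, on whose $A$-summand $v_\xi^{\otimes k}$ acts as a trivial representation tensored with $w^{\otimes|A^c|}$. Hence
\[
  \Fix_k^\xi=\bigoplus_{A}(\C\xi)^{\otimes A}\otimes\Fix\big(w^{\otimes|A^c|}\big),
\]
and Theorem~\ref{thm_invariants} applied to $O_{N-1}^+$ identifies $\Fix(w^{\otimes m})=TL_{N-1}(m)=\Span\{T_q^{\xi^\perp}:q\in NC_2(m)\}$, this span description being valid for all $N\ge 2$ and the $T_q^{\xi^\perp}$ being linearly independent precisely when $N-1\ge 2$. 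Since pairing the positions of $A^c$ by $q$ and placing $\xi$ at the positions of $A$ produces exactly the $p\in NC_{2,1}^{|A|}(k)$ with singleton set $A$ and pair set $q$, and since $\xi^{\otimes A}\otimes T_q^{\xi^\perp}=T_p^2(\xi^{\otimes s})$, this reads
\[
  \Fix_k^\xi=\Span\{T_p^2(\xi^{\otimes s}):p\in NC_{2,1}^s(k),\,0\le s\le k\},
\]
with the displayed vectors linearly independent as soon as $N\ge 3$ (they lie in distinct summands for distinct $A$, and within a summand independence is that of the $T_q^{\xi^\perp}$).

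It then remains to pass from $T_p^2$ to $T_p=T_p^1$, which is the combinatorial heart of the argument. Using $\sum_{i=1}^N e_i\otimes e_i=\xi\otimes\xi+\sum_{i=2}^N e_i\otimes e_i$ at each pair (equivalently $\mathrm{cup}^2=\mathrm{cup}^1-\xi\otimes\xi$), I would expand
\[
  T_p^2(\xi^{\otimes s})=\sum_{B\subseteq\mathrm{Pairs}(p)}(-1)^{|B|}\,T_{p^{(B)}}^1\big(\xi^{\otimes(s+2|B|)}\big),
\]
where $p^{(B)}\in NC_{2,1}(k)$ is obtained from $p$ by splitting each pair in $B$ into two singletons (splitting preserves non-crossingness). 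Grading the index set $NC_{2,1}(k)$ by the number of singletons, the term $B=\varnothing$ returns $T_p^1$ while every other term strictly raises the grading; hence the change of basis between $\{T_p^1(\xi^{\otimes s})\}$ and $\{T_p^2(\xi^{\otimes s})\}$ is unitriangular, so invertible. This gives $\Span\{T_p(\xi^{\otimes s})\}=\Span\{T_p^2(\xi^{\otimes s})\}=\Fix_k^\xi$ for all $N\ge 2$ (proving (1)) and transfers linear independence from the $T_p^2$-family to the $T_p$-family when $N\ge 3$ (proving (2)). The main obstacle I anticipate is bookkeeping: making the identification $\xi^{\otimes A}\otimes T_q^{\xi^\perp}=T_p^2(\xi^{\otimes s})$ precise, and checking carefully that splitting pairs preserves the non-crossing property and that the grading-raising part of the $T^1$–$T^2$ transition is nilpotent.
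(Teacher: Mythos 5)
Your proposal is correct and follows essentially the same route as the paper's proof: decompose $(\iota\otimes\pi_\xi)(u)=1\oplus w$, expand $H^{\otimes k}$ into the orthogonal summands indexed by singleton positions, apply Theorem~\ref{thm_invariants} to $O_{N-1}^+$ to get the $T_p^2$-description with linear independence for $N\ge 3$, and then pass between the $T^2$- and $T^1$-families by a unitriangular (singleton-grading-raising) change of basis, with part (3) handled via the Brauer description. The only difference is cosmetic: you expand $T_p^2$ in terms of the $T^1$-family with signs, whereas the paper expands $T_p^1$ in terms of the $T^2$-family; both transitions are unitriangular and mutually inverse.
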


\begin{proof}
  Consider $e_1 = \xi$ as the first vector of an ONB $(e_1,\ldots,e_n)$.  By
  definition we have $(\iota\otimes\pi_\xi)(u) = 1 \oplus w$ in the decomposition
  $H = \C e_1 \oplus e_1^\bot$, where $w$ is equivalent to the fundamental
  representation of $O_{N-1}^+$. As a result $(\iota\otimes\pi_\xi)(u)^{\otimes
    k}$ decomposes into pairwise orthogonal subrepresentations equivalent to
  $w^{\otimes k-s}$, $0\leq s\leq k$. We know that the subspace of fixed vectors
  of $w^{\otimes k-s}$ is spanned by the elements $T_q^2(1)$, $q \in
  NC_2(k-s)$. Now identifying $w^{\otimes k-s}$ with a subrepresentation of
  $(\iota\otimes\pi_\xi)(u)^{\otimes k}$ corresponds to inserting vectors $\xi$ at
  $s$ fixed legs of the tensor product $H^{\otimes k}$, and this maps $T_q^2(1)$
  to $T_r^2(\xi^{\otimes s})$, where $r$ is obtained from $q$ by inserting $s$
  singletons at fixed places. In this way we obtain all partitions of
  $NC_{2,1}^s(k)$.

  We know by Theorem~\ref{thm_invariants} that the vectors $T_q^2(1)$, $q \in
  NC_2(k-s)$, are linearly independent for $N-1\geq 2$. Since we have decomposed
  $(\iota\otimes\pi_\xi)(u)^{\otimes k}$ into orthogonal subrepresentations,
  this implies that the family of vectors $T_r^2(\xi^{\otimes s})$, $r \in
  NC_{2,1}(k)$, is linearly independent. Now we observe that the vectors
  $T_p(\xi^{\otimes s}) = T_p^1(\xi^{\otimes s})$, $p \in NC_{2,1}(k)$, can be
  decomposed as linear combinations of the vectors $T_r^2(\xi^{\otimes s})$ by
  writing
  \begin{displaymath}
    \sum_{i=1}^N e_i\otimes e_i = \xi\otimes\xi + \sum_{i=2}^N e_i\otimes e_i
  \end{displaymath}
  at each pair of legs of $H^{\otimes k}$ determined by the pairs in $p$. Note
  that the partitions $r\neq p$ used to decompose in this way a vector
  $T_p(\xi^{\otimes s})$ have strictly more singletons than $p$, so that the
  decomposition matrix is block triangular (with respect to the value of $s$)
  with identity blocks on the diagonal. This implies that the family
  $T_p(\xi^{\otimes s})$, $p \in NC_{2,1}(k)$, is linearly independant, and
  spans the same subspace as the vectors $T_r^2(\xi^{\otimes s})$. This proves
  the first two assertions.

  Finally, for any $p \in NC_{2,1}^s(k)$ we know that $T_p = T_{p'}$ for a
  suitable partition $p' \in P_2(s,k)$, see above, and that the maps $T_{p'}$
  are $O_N$-intertwiners, see the end of Section~\ref{sec_inv_theory}.
\end{proof}

We now reduce Theorem~\ref{theorem:N>3} to a linear independence problem:

\begin{proposition}\label{diff}
  For $N\geq 3$ and $k \in \N$, the following are equivalent:
  \begin{enumerate}
  \item We have $\Fix_k^{\xi_1} \cap \Fix_k^{\xi_2} = \Fix_k$ for some (or any)
    pair of linearly independent vectors $\xi_1, \xi_2 \in S_1$;
  \item We have $\Fix_k^\xi \cap \Fix_k^{O_N} = \Fix_k$ for some (or any)
    $\xi\in S_1$;
  \item The vectors $T_p(S(e_1\otimes\cdots\otimes e_1\otimes e_2))$, $p \in
    NC_{2,1}(k) \setminus NC_2(k)$, are linearly independent.
  \end{enumerate}
\end{proposition}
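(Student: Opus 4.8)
The plan is to fix an orthonormal basis with $e_1 = \xi$ and to play the three conditions against each other through the concrete description of $\Fix_k^\xi$ in Lemma~\ref{lem_fix_xi}. Writing $\Fix_k^\xi = \Fix_k \oplus W_\xi$ with $W_\xi := \Span\{T_p(\xi^{\otimes s}) : p \in NC_{2,1}(k)\setminus NC_2(k)\}$ --- a genuine direct sum precisely because $N \ge 3$, by Lemma~\ref{lem_fix_xi}(2) --- I would first record two facts that come for free from Lemma~\ref{lem_fix_xi}(3): since each $T_p$ is an $O_N$-intertwiner, $g^{\otimes k}T_p(\xi^{\otimes s}) = T_p((g\xi)^{\otimes s})$ for $g \in O_N$, so that $g^{\otimes k}\Fix_k^\xi = \Fix_k^{g\xi}$; and, taking $g$ in the stabilizer of $\xi$, $\Fix_k^\xi \subseteq \Fix_k^{O_{N-1}^\xi}$, the invariants of the classical point stabilizer $O(e_1^\perp)$.

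To get (1) $\iff$ (2) I would prove the set identity $\Fix_k^{\xi_1}\cap\Fix_k^{\xi_2} = \Fix_k^{\xi_1}\cap\Fix_k^{O_N}$ for every linearly independent pair. The inclusion $\subseteq$ holds because a common fixed vector is invariant under both classical stabilizers $O_{N-1}^{\xi_1}, O_{N-1}^{\xi_2}$ (by the first paragraph), and these generate $O_N$ for $N \ge 3$. For $\supseteq$ I use the rotation trick: if $v \in \Fix_k^{\xi_1}\cap\Fix_k^{O_N}$ and $g \in O_N$ is chosen with $g\xi_1 = \xi_2$ (transitivity on the sphere), then $v = g^{\otimes k}v \in g^{\otimes k}\Fix_k^{\xi_1} = \Fix_k^{\xi_2}$, so $v \in \Fix_k^{\xi_1}\cap\Fix_k^{\xi_2}$. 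As condition (3) involves no choice of vectors, this identity simultaneously yields (1) $\iff$ (2) and shows that all the relevant intersection dimensions are independent of $\xi$, $\xi_1$, $\xi_2$, which disposes of the ``some (or any)'' clauses.

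For (2) $\iff$ (3) the idea is to linearize $O_N$-invariance along the circle moving $\xi$. Let $X_{12} = E_{12} - E_{21} \in \mathfrak{so}_N$ and let $X_{12}^{(k)} = \sum_{j=1}^k 1^{\otimes(j-1)}\otimes X_{12}\otimes 1^{\otimes(k-j)}$ be the induced derivation of $H^{\otimes k}$. I would check that $\Fix_k^\xi\cap\Fix_k^{O_N} = \ker(X_{12}^{(k)}|_{\Fix_k^\xi})$: the inclusion $\subseteq$ is immediate since $O_N$-invariant vectors are killed by $\mathfrak{so}_N$, and for $\supseteq$ one uses that $v \in \Fix_k^\xi$ is already invariant under $O(e_1^\perp)$, hence $\mathfrak{so}_{N-1}$-invariant and invariant under a reflection of determinant $-1$; conjugating $X_{12}$ by rotations $g \in SO(e_1^\perp)$ with $ge_2 = e_b$ gives $X_{1b}^{(k)}v = g^{\otimes k}X_{12}^{(k)}(g^{-1})^{\otimes k}v = 0$ for all $b$, so $v$ is annihilated by all generators of $\mathfrak{so}_N$ and is $SO_N$-invariant, and the reflection promotes this to $O_N$-invariance.

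The computational heart is the action of $X_{12}^{(k)}$ on $W_\xi$. Acting leg by leg, the contribution of any pair of legs vanishes because $X_{12}^{(k)}$ annihilates the invariant tensor $\sum_i e_i \otimes e_i$ sitting on a pair; only the singletons survive, each carrying $e_1$ and contributing $X_{12}e_1 = -e_2$, and summing over the $s$ singleton slots produces the symmetrized insertion
\[
  X_{12}^{(k)} T_p(e_1^{\otimes s}) = -\tfrac{1}{(s-1)!}\, T_p\big(S(e_1^{\otimes(s-1)}\otimes e_2)\big), \qquad p \in NC_{2,1}^s(k),\ s \ge 1 .
\]
Thus $X_{12}^{(k)}$ carries the basis $\{T_p(e_1^{\otimes s})\}$ of $W_\xi$ onto nonzero multiples of exactly the vectors in (3); since the domain vectors are linearly independent (again $N \ge 3$), the restriction $X_{12}^{(k)}|_{W_\xi}$ is injective iff those image vectors are, and $\ker(X_{12}^{(k)}|_{\Fix_k^\xi}) = \Fix_k \oplus \ker(X_{12}^{(k)}|_{W_\xi})$ reduces to $\Fix_k$ exactly under (3), giving (2) $\iff$ (3). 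I expect the main obstacle to be the identity $\Fix_k^\xi\cap\Fix_k^{O_N} = \ker(X_{12}^{(k)}|_{\Fix_k^\xi})$, i.e. bridging the compact group $O_N$ and its infinitesimal condition: this is where the reductive bookkeeping --- $\mathfrak{so}_{N-1}$-invariance, conjugation to reach all $X_{1b}$, and a determinant $-1$ reflection to pass from $SO_N$ to $O_N$ --- is essential, whereas the displayed computation is then mechanical.
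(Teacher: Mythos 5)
Your proposal is correct and follows essentially the same route as the paper: the equivalence (1)$\iff$(2) is argued exactly as there (two classical stabilizer copies of $O_{N-1}$ generate $O_N$, combined with the equivariance $g^{\otimes k}\Fix_k^\xi = \Fix_k^{g\xi}$ and transitivity on $S_1$), and your computation of $\ker\big(X_{12}^{(k)}|_{\Fix_k^\xi}\big)$ is the paper's differentiation of the $O_N$-covariant map $\Tt_\lambda:\xi\mapsto\sum_p\lambda_p T_p(\xi^{\otimes s})$ on the sphere in disguise, producing the identical vectors $T_p(S(e_1^{\otimes(s-1)}\otimes e_2))/(s-1)!$ and the identical reduction of (2) to the linear independence in (3). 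The only packaging difference is that you pass from the infinitesimal condition back to $O_N$-invariance via $\mathfrak{so}_N$-annihilation, connectedness of $SO_N$, and a reflection, whereas the paper simply observes that a covariant map whose tangential derivative vanishes is constant on the connected sphere $S_1$; both hinge on the same covariance and transitivity facts.
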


\begin{proof} 
  We first recall that two different stabilizer subgroups $O_{N-1} < O_N$
  generate $O_N$. Indeed, for $1\leq i<j\leq N$, calling $R_{i,j,\theta}$ the
  rotation of angle $\theta$, between the canonical basis vectors $e_i$ and
  $e_j$ it is known that $R_{i,j,\theta}$ generate $SO_N$ if one takes all
  $1\leq i<j\leq N,\theta\in [0,2\pi )$.

  Without loss of generality -- at the possible cost of involving conjugation by
  rotations -- we can assume that the first copy of $O_{N-1}$ fixes $e_N$ and
  the second copy fixes $e_1$.  One can check that $R_{1,N,\theta}$ can be
  obtained as a conjugation of $R_{1,N-1,\theta}$ by $R_{N-1,N,\pi}$.  This
  implies that any two copies of $SO_{N-1}< SO_N$ generate $SO_N$.  The
  fact that two different copies of $O_{N-1} < O_N$ generate $O_N$ follows
  from the fact that we can find in $O_{N-1}$ an isometry that takes $SO_{N-1}$
  to $O_{N-1}\backslash SO_{N-1}$ by left multiplication.
  
  Now let $x \in \Fix_k^{\xi_1} \cap \Fix_k^{\xi_2}$ be given.  Then $x$ is
  fixed by the two copies of $O_{N-1}$ inside the quantum subgroups
  $O_{N-1}^{+,\xi_1}$ and $O_{N-1}^{+,\xi_2}$, hence it is fixed by $O_N$ by the
  previous paragraph. On the other hand, for $g \in O_N$ we have $\Fix_k^{g\xi}
  = g \cdot \Fix_k^\xi := v(g)^{\otimes k} \Fix_k^\xi$, where $v = u^{O_N}$ is
  the fundamental representation of $O_N$. As a result, if $x \in \Fix_k^{O_N}
  \cap \Fix_k^\xi$, then $x$ lies in $\Fix_k^{\zeta}$ for any $\zeta \in
  S_1$. This shows that $\Fix_k^{\xi_1} \cap \Fix_k^{\xi_2}$ and $\Fix_k^{O_N}
  \cap \Fix_k^\xi$ are equal and independent of the choice of $\xi$ and the
  linearly independent pair $\xi_1, \xi_2$ in $S_1$.

  According to the previous lemma, any $x \in \Fix_k^{\xi}$ can be written $x =
  \sum \lambda_p T_p(\xi^{\otimes s})$ in a unique way, and since $\Fix_k$ is
  spanned by the vectors $T_p(1)$, $p \in NC_2(k)$, we have $x \in \Fix_k$ \iff
  $\lambda_p = 0$ for all $p \in NC_{2,1}^s(k)$, $s>0$. Besides, if $x$ is
  $O_N$-invariant we have $x = g \cdot x = \sum \lambda_p T_p((g\xi)^{\otimes
    s})$ for all $g \in O_N$, so that the map $\Tt_\lambda : \R^N \to
  (\R^N)^{\tens k}$, $\xi \mapsto \sum \lambda_p T_p(\xi^{\otimes s})$ is
  constant on $S_1$. Hence the second assertion in the statement is equivalent
  to the implication (I) ``$\Tt_\lambda$ constant on $S_1$ $\Rightarrow$
  $\lambda_p = 0$ for $p\in NC_{2,1}^s(k)$, $s>0$'' for all $\lambda :
  NC_{2,1}(k) \to \C$.

  Now we differentiate: $\Tt_\lambda$ is constant on $S_1$ \iff $d_\xi
  \Tt_\lambda(\eta) = 0$ for all $\xi\in S_1$, $\eta\bot\xi$. Moreover by
  $O_N$-covariance of $\Tt_\lambda$ we have $g\cdot d_\xi \Tt_\lambda(\eta) =
  d_{g\xi}\Tt_\lambda(g\eta)$, and since $O_N$ acts transitively on pairs of
  normed orthogonal vectors, $\Tt_\lambda$ is constant on $S_1$ \iff $d_{e_1}
  \Tt_\lambda(e_2) = 0$. Then we compute $d_\xi(\xi^{\otimes s})(\eta) =
  S(\xi\otimes\cdots\otimes\xi\otimes\eta)/(s-1)!$, hence
  \begin{displaymath}
    d_\xi\Tt_\lambda(\eta) = \sum_{s>0,~ p\in NC_{2,1}^s(k)} \frac{\lambda_p}{(s-1)!}
    ~ T_p(S(\xi^{\otimes s-1}\otimes\eta)).
  \end{displaymath}
  This shows the equivalence of the last assertion in the statement with the
  condition (I) above.
\end{proof}

\begin{proof}[Proof of Theorem~\ref{theorem:N>3}]
  We will verify the linear independence condition given in Part (3) of Proposition \ref{diff}.  Consider the vectors $y_{p,i} = T_p^3(e_{i_1}\otimes\cdots\otimes e_{i_s})
  \in H^{\otimes k}$ with $i_l = 1, 2$ and $p \in NC_{2,1}(k)$. They form a
  linearly independent family, which we shall denote by $\Cc$. Indeed if $\langle y_{p,i}| y_{q,j}\rangle \neq 0$,
  then $p$, $q$ must have the same singletons and $i = j$. Moreover when this is
  the case, then $\langle y_{p,i}| y_{q,j}\rangle$ coincides with the scalar product
  $\langle T'_{p'}(1)|T'_{q'}(1)\rangle$ associated with the partitions $p'$, $q' \in
  NC_2(k-s)$ obtained from $p$ and $q$ by removing singletons, where $T'_{p'}$
  is the map analogous to $T_{p'}$, but in dimension $N-2$. Since $N-2\geq 2$, the
  vectors $T'_{p'}(1)$ are linearly independent, and we can deduce that the
  Gram matrix of the family $\Cc$ is invertible (cf. Theorem \ref{thm_invariants}).
  
  Now consider the vectors $x_p = T_p(S(e_1\otimes\cdots\otimes e_1\otimes e_2))/(s-1)!$ from Proposition~\ref{diff}.  Note that each $x_p$ can be written as a (unique) linear combination of elements in $\Cc$. This follows from the definition of $S$ and by writing $\sum_{i=1}^N e_i\otimes e_i = e_1\otimes e_1 + e_2\otimes
  e_2 + \sum_{i=3}^N e_i\otimes e_i$ as in the proof of Lemma~\ref{lem_fix_xi}. More
  precisely, if $p\in NC_{1,2}^s(k)$ then $x_p$ decomposes into the sum of the $s$ vectors $y_{p,i}$ with $i$   taking the value $2$ only once, and a linear combination of vectors $y_{q,j}$ with $q$ having
  strictly more singletons that $p$. As a result, if we partially order the families $\Bb =
  (x_p)$ and $\Cc = (y_{p,i})$ according to the number of singletons $s$ in
  $p$, the corresponding decomposition matrix for $\Bb$ in terms of the basis $\Cc$ will be block lower-triangular (with rectangular blocks),
  and each diagonal sub-block (one for each integer $s$) is itself block diagonal,
  with diagonal blocks which are non-zero columns (one for each partition $p \in
  NC_{2,1}^s(k)$). In particular, this decomposition matrix has maximal rank and therefore $\Bb$ is linearly independent.
\end{proof}

\begin{remark} \label{rem:roland}
  Although the proof above only applies for $N\geq 4$, it seems very likely that
  the linear independence condition introduced in Proposition~\ref{diff}, and
  hence Theorem~\ref{theorem:N>3}, also hold at $N=3$. This would imply the
  hyperlinearity of $\hat O_N^+$ and $\hat U_N^+$ for all $N\geq 2$, without
  relying on Theorem~\ref{theorem:N=4}. In fact, we have strong numerical
  evidence that the family of vectors $T_p(e_1)$ associated to ``one singleton''
  partitions $p \in NC_{2,1}^1(k)$ is linearly independant for all $N\geq 2$,
  and using similar techniques as above this would imply
  Theorem~\ref{theorem:N>3} for all $N\geq 3$.
\end{remark}

We now provide a proof of Theorem~\ref{theorem:N=4}.  

\begin{proof}[Proof of Theorem~\ref{theorem:N=4}]
  Let $u \in \mc B(\C^{2n}) \otimes C^u(O_{2n}^+)$ be the fundamental representation
  of $O_{2n}^+$, let $l\in \N$, and put $u_{l} = u^{\otimes l}$.  Similarly, let
  $w$ be the fundamental representation of $U_{2n}^+$ and put $w_{l} = w \otimes
  \bar w \otimes w \otimes \ldots$ ($l$-terms).  In what follows, we will regard $w_{l}$ and $u_{l}$ as
  both acting on the same Hilbert space (cf. Remark \ref{rem:unitaryinvariants}).  In
  particular, when $l$ is even, we have $\Fix(u_{l}) = \Fix(w_{l})$ under this identification. Moreover, from the description of the spaces of intertwiners for free products of compact
  quantum groups given in \cite[Proposition~2.15]{Lemeux}, 
  it also follows that $\Fix(u_{l}^{O_n^+\hatfree O_n^+}) = \Fix(w_{l}^{U_n^+\hatfree
    U_n^+})$ when $l$ is even.

  Now choose $x \in \Fix(u_{l}^{\mathfrak S_{2n}}) \cap
  \Fix(u_{l}^{O_n^+\hat{*}O_n^+}) \subset (\C^{2n})^{\otimes l}$.  Our goal,
  according to Proposition \ref{proposition:Haar} \eqref{sgFix}, is to show that
  $x \in \Fix(u_{l})$.  Since $\Fix(u_{l}^{O_n^+\hat{*}O_n^+}) =  \Fix(u_{l}) = \{0\}$ when $l$ is odd, we will assume $l=2k$ ($k \in \N$) for the remainder of the proof.    According to the discussion in the previous paragraph,
  we have $x \in \Fix(w_{2k}^{U_n^+\hatfree U_n^+}) \subset
  \Fix(w_{2k}^{U_n\times U_n})$, where the classical group $U_n \times U_n \le U_n^+ \hat * U_n^+$ acts block-diagonally on
  $\C^{2n}$ with respect to a fixed orthonormal basis $(e_i)_{i=1}^{2n}$.  Now we recall the following elementary group theoretic fact.

  \begin{align} \label{eqn:generate}\text{$U_{2n}$ is generated by the subgroups $\mathfrak S_{2n}$
    and $U_n \times U_n$.}
\end{align}

  One can actually even show more, namely that for any $d\geq 2$, $U_d$ is
  (algebraically) generated by the subgroups $\mathfrak S_d$ and $U_2$, where
  $U_2$ is viewed as sitting on the upper left corner of $d\times d$
  matrices. This is trivial for $d=2$. For general $d$, we proceed by induction
  over $d$: given an element of $U_d$, it is possible to multiply it on the left
  by $d-1$ elements of type $\sigma U \sigma^{-1}$ where $\sigma \in \mathfrak
  S_d$ and $U\in U_2$, and ensure that the bottom element of the last column of
  the new element of $U_d$ is $1$.  Indeed, the action by left multiplication
  leaves the columns invariant, and successive operations of the groups $U_2,
  (13)U_2(13), \ldots ,(1d)U_2 (1d)$ can be performed to ensure that the element
  of respective indices $(1,d), (2,d),\ldots , (d-1,d)$ is sent to zero, and in
  turn, that the entry of index $(d,d)$ is sent to $1$.  By orthogonality
  relations, the new matrix obtained has also zeros on all entries of the last
 row apart from the last one, therefore it sits in $U_{d-1}$ viewed as the
  upper left corner of $U_d$. The general result follows by induction.

  Applying fact \eqref{eqn:generate}, we finally conclude that $x \in
  \Fix(w_{2k}^{U_n^+\hatfree U_n^+}) \cap \Fix (w_{2k}^{U_{2n}})$.  To finish
  the proof, we appeal to the following lemma, which is a special case of a very
  recent result of Chirvasitu \cite{Ch}.  We include a detailed proof for the
  convenience of the reader.
\end{proof}

\begin{lemma}[\cite{Ch}, Lemma 3.11] \label{Ch}
  With the notation and conventions as above, we have the equalities 
  \[\Fix(w_{2k}^{U_n^+\hatfree U_n^+}) \cap \Fix
  (w_{2k}^{U_{2n}}) = \Fix(w_{2k}) =  \Fix(u_{2k}) \qquad (k \in \N).\]  
\end{lemma}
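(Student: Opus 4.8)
The plan is to establish the nontrivial equality $\Fix(w_{2k}^{U_n^+\hatfree U_n^+})\cap\Fix(w_{2k}^{U_{2n}})=\Fix(w_{2k})$ by describing all three spaces combinatorially as spans of partition vectors, and then showing that invariance under the \emph{classical} group $U_{2n}$ eliminates the crossing diagrams that are a priori allowed. (The remaining identity $\Fix(w_{2k})=\Fix(u_{2k})$ is already Remark~\ref{rem:unitaryinvariants} with $N=2n$, so there is nothing to prove there.) First I would fix the alternating word $w\,\bar w\,w\,\bar w\cdots$ on $2k$ legs and realize every partition vector $T_p$ inside $(\C^{2n})^{\otimes 2k}$ with indices running over $\{1,\dots,2n\}$. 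Three descriptions are then needed. By Theorem~\ref{thm_invariants} together with Remark~\ref{rem:unitaryinvariants}, $\Fix(w_{2k})=\Span\{T_p\mid p\in NC_2(2k)\}$; here one uses that, because the word is alternating, every non-crossing pairing automatically joins a $w$-leg to a $\bar w$-leg. By the first fundamental theorem of invariant theory for the classical unitary group, $\Fix(w_{2k}^{U_{2n}})=\Span\{T_{q_\sigma}\mid \sigma\in\mathfrak S_k\}$, where $q_\sigma\in P_2(2k)$ is the (possibly crossing) pairing matching the $t$-th $w$-leg with the $\sigma(t)$-th $\bar w$-leg. Finally, writing $\C^{2n}=\C^n\oplus\C^n$ for the block decomposition preserved by $U_n^+\hatfree U_n^+$, the free-product intertwiner calculus of \cite[Proposition~2.15]{Lemeux} gives $\Fix(w_{2k}^{U_n^+\hatfree U_n^+})=\Span\{T_{p,\gamma}\}$, where $p\in NC_2(2k)$, $\gamma$ runs over all colourings of the blocks of $p$ by the two classes $\{1,\dots,n\}$ and $\{n+1,\dots,2n\}$, and $T_{p,\gamma}$ is the variant of $T_p$ in which the common index of each block $B$ is summed only over the class $\gamma(B)$. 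The decisive structural feature is that a free product of non-crossing categories remains non-crossing: \emph{no crossing diagram occurs in the free-product space}, and each $T_{p,\gamma}$ is supported on tuples realizing the non-crossing pairing $p$.

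The inclusion $\supseteq$ is immediate from \eqref{sg:inclusion}, since $U_n^+\hatfree U_n^+$ and $U_{2n}$ are quantum subgroups of $U_{2n}^+$. For the reverse inclusion, let $x$ lie in the intersection. Because $x$ is $U_{2n}$-invariant I may write $x=\sum_{\sigma\in\mathfrak S_k}d_\sigma\,T_{q_\sigma}$, and since $NC_2(2k)$ is exactly the set of non-crossing $q_\sigma$, proving the reverse inclusion amounts to showing that $d_\sigma=0$ whenever $q_\sigma$ is crossing.

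The clean case is $2n\ge k$. Fix a crossing matching $\sigma$, choose $k$ pairwise distinct values in $\{1,\dots,2n\}$, and let $i$ be the index tuple assigning one such value to each pair of $q_\sigma$. Then the equality pattern of $i$ is exactly $q_\sigma$, so no non-crossing pairing is refined by it; since $x$ lies in the free-product space, which is supported on tuples realizing a non-crossing pairing, we get $x_i=0$. On the other hand distinctness of the values forces $T_{q_\tau}(i)=\delta_{\sigma,\tau}$, whence $x_i=d_\sigma$. Therefore $d_\sigma=0$, and $x=\sum_{q_\sigma\in NC_2(2k)}d_\sigma\,T_{q_\sigma}\in\Fix(w_{2k})$. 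Note that in this regime the colouring data is not even needed: the mere non-crossing \emph{support} of the free-product space already detects the crossings.

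The hard part is that this generic argument needs $2n\ge k$, both to guarantee $k$ distinct index values and to make the vectors $T_{q_\sigma}$ linearly independent. Yet the base step of Corollary~\ref{CEPorthogonal} requires exactly $n=2$, hence $N=2n=4$, for \emph{all} $k$; in the range $k>2n$ the diagrams become linearly dependent, the distinct-value tuples disappear, and the support of the free-product space no longer suffices to detect crossings, so the colouring data genuinely re-enters. I would resolve this uniformly in $N$ in the spirit of the differentiation argument of Theorem~\ref{theorem:N>3}: using that $x$ is invariant under the block-mixing one-parameter rotation subgroups of $U_{2n}$ and differentiating, one obtains linear relations among the colour-split vectors $T_{p,\gamma}$ that hold for every $N$, and then an induction on the number of blocks—a block-triangular decomposition exactly like those in Lemma~\ref{lem_fix_xi} and Proposition~\ref{diff}—forces both the vanishing of the crossing coefficients and the colour-independence $c_{p,\gamma}=c_{p,\gamma'}$ that collapses $\sum_\gamma c_{p,\gamma}T_{p,\gamma}$ to $c_p\,T_p$. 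Carrying out this triangularity cleanly, while controlling the degeneracies created when distinct blocks are forced to share an index value, is where the real work of the lemma lies.
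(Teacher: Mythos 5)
There is a genuine gap. Your evaluation argument (pick $k$ pairwise distinct index values realizing a crossing pairing $q_\sigma$ exactly, note that the free-product space is supported on tuples whose kernel is coarser than some non-crossing pairing, and conclude $d_\sigma=0$) is correct, but it only works when $2n\ge k$. The lemma, however, is needed for a \emph{fixed} $n$ and \emph{all} $k\in\N$: the base step of Corollary~\ref{CEPorthogonal} uses precisely $n=2$, so the relevant regime is exactly $k>2n$, where your argument breaks down. For that regime you offer only a plan (differentiation along one-parameter rotation subgroups plus an induction on the number of blocks), which you do not carry out and which you yourself flag as ``where the real work of the lemma lies''; nothing in the sketch explains how to handle the linear dependences among crossing diagrams that appear in this range. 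So as it stands the proposal does not prove the statement in the cases where it is actually used.

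The idea you are missing is the one the paper's proof turns on. The paper passes to the operator picture via $\Phi:(H\otimes\bar H)^{\otimes k}\to\mc B(H^{\otimes k})$, writes $T$ in the intersection uniquely in the free-product basis $T_{q,i}=T_qP_i$ (with $q$ non-crossing and $i\in\{1,2\}^k$ — this is the same data as your colourings $\gamma$), subtracts the part with $i=1^k$, and then uses the following observation: a $U_{2n}$-intertwiner is a linear combination of leg-permutation operators $T_r$, $r\in S(k,k)$, and these commute with $h^{\otimes k}$ for \emph{every} linear map $h\in\mc B(H)$, not just unitaries. Applying this with the non-invertible folding map $g:e_r,e_{r+n}\mapsto e_r$, which carries each block $H_i$ isomorphically onto $H_{1^k}$, one gets $0=Tg^{\otimes k}P_i=g^{\otimes k}TP_i=\sum_q\lambda_{q,i}\,T_{q,1^k}\,g^{\otimes k}P_i$, so that everything reduces to the linear independence of the \emph{non-crossing} family $T_{q,1^k}$ — Temperley--Lieb diagrams in dimension $n\ge2$, which is Theorem~\ref{thm_invariants} and holds for all $k$. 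The crossing diagrams' independence (and the existence of tuples with distinct entries) is never needed, which is exactly what makes the argument uniform in $k$. Without this commutation/folding trick, or a genuinely completed substitute for it, the case $k>2n$ remains open in your write-up.
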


\begin{proof}
  Let $H = \C^{2n}$, fix an orthonormal basis $(e_i)_{i=1}^{2n}$ for $H$ and
  write $H = H_1 \oplus H_2$, where $H_1 = \text{span}(e_1, \ldots, e_{n})$ and
  $H_2 = \text{span}(e_{n+1},\dots, e_{2n})$.  Denote by $P_i \in \mc B(H)$ the
  orthogonal projection whose range is $H_i$. In the following, we will fix once
  and for all the linear isomorphism 
  \begin{equation} \label{lin_iso}
    \Phi:(H\otimes \bar H)^{\otimes k} \to \mc B(H^{\otimes k})
  \end{equation} 
  given by identifying an elementary tensor $e_{i_1} \otimes \overline{e_{i_2}}
  \otimes \ldots \otimes e_{i_{2k-1}} \otimes \overline{e_{i_{2k}}} \in (H
  \otimes \bar H)^{\otimes k}$ with the rank-one operator
  \begin{displaymath}
    H^{\otimes k} \owns \xi \mapsto \big(\otimes_{r=1}^k e_{i_{2r-1}}\big) 
    \langle \otimes_{r=1}^k e_{i_{2r}}\mid\xi\rangle \in H^{\otimes k}.
  \end{displaymath}
  Recall that $\Fix(w_{2k}^{U_{2n}})$ is spanned by the vectors $T_p = \sum_i
  \delta_i^p (e_{i_1}\otimes \overline{e_{i_2}} \otimes \ldots \otimes
  e_{i_{2k-1}} \otimes \overline{e_{i_{2k}}})$ where $p \in P_2(2k)$ is a pair
  partition respecting the additional requirement that even points are connected
  to odd points. 

  When $\Phi$ is restricted to the subspace $\Fix(w_{2k}^{U_{2n}})$, we obtain
  an isomorphism
  \begin{equation}\label{U_{2n}}\Fix(w_{2k}^{U_{2n}}) \cong^\Phi
    \Hom((w^{U_{2n}})^{\otimes k}, (w^{U_{2n}})^{\otimes k}),
  \end{equation} 
  which maps the vector $T_p$ to a map $T_q$, where $q \in P_2(k,k)$ is obtained
  by connecting the even (respectively, odd) points of $p$ to the input
  (respectively, output) points of $q$. We denote by $q \in S(k,k)$ the pair
  partitions obtained in this way: these are exactly the ones where input points
  are connected to output points via the permutation specified by $q$. Note that one recovers the Schur-Weyl duality
  for unitary groups describing $\Hom((w^{U_{2n}})^{\otimes k},
  (w^{U_{2n}})^{\otimes k})$ as the linear span of the operators $(T_q)_{q \in S(k,k)}$ which permute the tensor factors of $H^{\otimes k}$.

  On the other hand, the image by $\Phi$ of the subspace $\Fix(w_{2k})$ is
  spanned by the maps $T_q$ where $q$ belongs to a subset $S'(k,k) \subset
  S(k,k) \subset P_2(k,k)$: namely the one corresponding to $p \in NC_2(2k)
  \subset P_2(2k)$. The only thing we will need to know about $S'(k,k)$ is that
  the family $T_q$, $q \in S'(k,k)$, is linearly independent as soon as $n\geq
  2$. This is indeed the case since $(T_p)_{p \in NC_2(2k)}$ is linearly independent and $\Phi$ is an
  isomorphism.

  Finally, for each $q \in S'(k,k)$ and each $k$-tuple $i = (i_1, \ldots, i_{k})
  \in \{1,2\}^k$, we define a linear map $T_{q,i}:= T_q P_i$, where $P_i :=
  \otimes_{r=1}^k P_{i_r}$ is the orthogonal projection whose range is
  $H_i := \otimes_{r=1}^k H_{i_r}$.  From the description of the
  intertwiner spaces of $U_N^+$ ($N \ge 2$) and their free products given in
  \cite[Section 9]{BaCo} and \cite[Proposition 2.15]{Lemeux}, respectively, it
  follows that the family $T_{q,i}$, $q \in S'(k,k)$, $i \in \{1,2\}^k$, forms a
  basis of the intertwiner space 
  \begin{displaymath}
    \Hom((w^{U_n^+\hat *
      U_n^+})^{\otimes k}, (w^{U_n^+\hat * U_n^+})^{\otimes k}) \cong^\Phi 
    \Fix(w_{2k}^{U_n^+\hat * U_n^+}).
  \end{displaymath}

  In view of the above isomorphisms, it remains to demonstrate that any linear
  map $T \in $ $\Hom$ $((w^{U_n^+\hat * U_n^+})^{\otimes k}, (w^{U_n^+\hat *
    U_n^+})^{\otimes k}) \cap \Hom((w^{U_{2n}})^{\otimes k},
  (w^{U_{2n}})^{\otimes k})$ lies in fact in $\Hom(w^{\otimes k},w^{\otimes
    k})$. By linear independence, $T$ can be uniquely expressed as the sum
  \begin{displaymath}
    T = \sum_{i \in \{1,2\}^k} \sum_{q \in S'(k,k)}\lambda_{q, i}T_{q,i} \qquad 
    (\lambda_{q, i} \in \mathbb C).
  \end{displaymath}
  Denote by $1^k = (1,1,\ldots, 1)$ the constant $k$-tuple.  Since we can
  decompose each $T_q$ as $T_q = \sum_{i \in \{1,2\}^k} T_{q, i}$, we may
  subtract $T_1 := \sum_{q\in S'(k,k)} \lambda_{q, 1^k} T_q \in \Hom(w^{\otimes
    k}, w^{\otimes k})$ from $T$ and consequently assume for the remainder that
  $T P_{1^k} = 0$.

  Now consider an arbitrary $k$-tuple $i$.  We claim that $T|_{H_i} = 0$.  To
  see this, consider the linear map $g : H \to H_1$, $e_r \mapsto e_r$, $e_{r+n}
  \mapsto e_r$ for all $r = 1, \ldots, n$. Observe that the restriction
  $g^{\otimes k} : H_i \to H_{1^k}$ is an isomorphism and that $g^{\otimes k}
  T_{q,i} = T_{q,1^k} g^{\otimes k} P_i$. Moreover, since $T$ is an
  $U_{2n}$-intertwiner it is a linear combination of maps $T_r$, $r \in S(k,k)$,
  and as each of these maps verifies the relation $h^{\otimes k} T = T
  h^{\otimes k}$ for any $h \in \mc B(H)$. We have then
  \begin{displaymath}
    0 = Tg^{\otimes k}P_i = g^{\otimes k} T P_i = 
    g^{\otimes k} \sum_q \lambda_{q,i}T_{q,i} 
    = \sum_q \lambda_{q,i}T_{q,1^k} g^{\otimes k} P_i.
  \end{displaymath}
  Since the family $T_{q,1^k}$, $q\in S'(k,k)$ is linearly independent and $g^{\otimes k}P_i: H_i \to H_{1^k}$ is an isomorphism, we
  conclude that $\lambda_{q,i} = 0$ for each $q$.  Finally, since $H^{\otimes k}
  = \oplus_{i}H_{i}$ and $i$ was arbitrary, this implies $T = 0$.
  I.e., $T = T_1 \in \Hom(w^{\otimes k}, w^{\otimes k})$.
\end{proof}

\section{Applications} \label{section:apps}

\subsection{Free entropy dimension}
In this section we present an application of our hyperlinearity results to the computation
of the free entropy dimension of the canonical generators of $L^\infty(O_N^+)$.  We refer the reader to the survey \cite{Voic} for details on the various notions of free entropy dimension and related concepts.

Let $\Gamma$ be a finitely generated discrete group with a finite symmetric
system of generators $(g_i)_{i=1}^n$, and put $x_i = \Re \lambda(g_i), y_i = \Im
\lambda(g_i) \in \mc L(\Gamma)$.  In \cite[Corollary 4.9]{CoSh}, Connes and
Shlyakhtenko showed that the (non-microstates) free entropy dimension
$\delta^*(x_i, y_i)$ verifies the inequality
\begin{align}\label{CS_ineq}
  \delta^*(x_i, y_i) \le \beta_1^{(2)}(\Gamma)-\beta_0^{(2)}(\Gamma) +1,
\end{align}
where $\beta_k^{(2)}(\Gamma)$ is the $k$th $\ell^2$-Betti number of $\Gamma$. 
On the other hand by \cite{BCG}, the (modified) microstates free entropy
dimension is known to satisfy the inequality
\begin{align}\label{BCG_ineq}\delta_0(x_i,y_i) \le \delta^*(x_i,y_i) .
\end{align} Finally, if $\mc L(\Gamma)$ is diffuse and has the Connes embedding
property, it was shown in \cite[Corollary 4.7]{Jun} that
\begin{align}\label{Jun_ineq}
  1 \le \delta_0(x_i,y_i).
\end{align}
All of the above inequalities apply to the case of quantum groups.  More
precisely, let $\G$ be a compact matrix quantum group of Kac type with diffuse
Connes embeddable von Neumann algebra $L^\infty(\G)$, let $(u^i)_{i=1}^n$ be a
self-conjugate family of inequivalent irreducible unitary representations of
$\G$ whose matrix elements generate $\Pol(\G) \subseteq L^\infty(\G)$, and let
$(x^i_{kl}), (y^i_{kl}) \subset L^\infty(\G)$ be the real and imaginary parts of
the matrix elements of these representations (respectively).  Then we have the
chain of inequalities
\begin{align} \label{ineq:quantum}
1 \le \delta_0(x^i_{kl},y^i_{kl}) \le \delta^*(x^i_{kl},y^i_{kl}) \le
\beta_1^{(2)}(\G)-\beta_0^{(2)}(\G) +1,
\end{align}
where $\beta^{(2)}_k(\G)$ is the \textit{$k$th $\ell^2$-Betti number} of the
compact quantum group $\G$ introduced by Kyed. See \cite{Ky08} and \cite{Lu98}.  Putting all of this together, we obtain the following result.

\begin{theorem}
  The microstates (and non-microstates) free entropy dimension of
  $L^\infty(O_N^+)$ associated to the canonical generators $(u_{ij})_{1 \le i,j
    \le N}$ is $1$ for all $N \ge 4$.
\end{theorem}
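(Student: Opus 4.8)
The plan is to read off the exact value $1$ by collapsing the two ends of the chain of inequalities \eqref{ineq:quantum} onto one another. First I would check that $O_N^+$ fits the hypotheses under which \eqref{ineq:quantum} was derived, namely that it is a compact matrix quantum group of Kac type whose von Neumann algebra is diffuse and Connes embeddable. Kac type and the matrix structure are immediate, diffuseness holds because $L^\infty(O_N^+)$ is a II$_1$-factor for $N\ge 3$, and Connes embeddability for $N \ge 4$ is precisely Corollary~\ref{CEPorthogonal}. I would also record that the fundamental representation $u$ is irreducible and self-conjugate (since $\bar u = u$), that its matrix elements generate $\Pol(O_N^+)$, and that these generators are already self-adjoint; hence the self-conjugate family entering \eqref{ineq:quantum} may be taken to be $\{u\}$ and the associated real and imaginary parts reduce to $(u_{ij})_{1\le i,j\le N}$ itself.

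With the hypotheses in place, \eqref{ineq:quantum} yields
\[ 1 \le \delta_0(u_{ij}) \le \delta^*(u_{ij}) \le \beta_1^{(2)}(O_N^+) - \beta_0^{(2)}(O_N^+) + 1. \]
The lower bound $1 \le \delta_0$ is the content of \eqref{Jun_ineq}, and this is exactly where Corollary~\ref{CEPorthogonal} enters: without Connes embeddability one would retain only the upper estimates \eqref{BCG_ineq} and \eqref{CS_ineq}. It therefore remains to evaluate the right-hand side, and the plan is to show that it equals $1$, that is, that $\beta_1^{(2)}(O_N^+) - \beta_0^{(2)}(O_N^+) = 0$.

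For this I would invoke the known computation of the $\ell^2$-Betti numbers of $O_N^+$ in the sense of Kyed. The zeroth Betti number $\beta_0^{(2)}(O_N^+)$ vanishes because the dual discrete quantum group is infinite, and the first Betti number $\beta_1^{(2)}(O_N^+)$ vanishes for $N \ge 3$ by the $L^2$-cohomology computations for free orthogonal quantum groups available in the literature. Substituting $\beta_0^{(2)} = \beta_1^{(2)} = 0$ collapses the upper bound in \eqref{ineq:quantum} to $1$, and the resulting squeeze $1 \le \delta_0(u_{ij}) \le \delta^*(u_{ij}) \le 1$ forces equality throughout, so that both the microstates dimension $\delta_0$ and the non-microstates dimension $\delta^*$ equal $1$.

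The assembly itself is a short squeeze argument; the genuine difficulty is concentrated in the two external inputs, and both bounds must \emph{independently} equal $1$. The more substantive input — and the one this paper supplies — is the Connes embeddability of Corollary~\ref{CEPorthogonal}, which is indispensable for the lower bound via \eqref{Jun_ineq}; the other is the vanishing of $\beta_1^{(2)}(O_N^+)$, which pins down the upper bound. The main obstacle is precisely this two-sided rigidity: were $\beta_1^{(2)}(O_N^+)$ to fail to vanish the upper estimate would exceed $1$, while without Connes embeddability the value could a priori fall below $1$, so neither ingredient can be dispensed with in order to determine the exact value.
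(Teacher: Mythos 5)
Your proposal is correct and takes essentially the same route as the paper: the paper's proof likewise consists of the squeeze via the chain \eqref{ineq:quantum}, with the hypotheses (Kac type, diffuseness, and Connes embeddability from Corollary~\ref{CEPorthogonal}) supplying the lower bound and the citations \cite{cht,Ve} supplying precisely the vanishing $\beta_1^{(2)}(O_N^+)-\beta_0^{(2)}(O_N^+)=0$ that you invoke for the upper bound. Your write-up merely makes explicit the hypothesis-checking and the two separate Betti-number vanishings that the paper compresses into one sentence.
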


\begin{proof}
  It was proved in \cite{cht,Ve} that
  the right hand side of inequality \eqref{ineq:quantum} is exactly $1$. Together
  with the above discussion, the proof is complete.
\end{proof}

\subsection{A remark on the classification of intermediate quantum subgroups
  between $O_N$ and $O_N^+$} \label{section:maximality}

Let $N \ge 3$ and consider the quantum group $O_N^+$.  It is currently an open
problem to determine all intermediate quantum subgroups $O_N \le \G \le O_N^+$.
At this time, only one such intermediate quantum subgroup is known, namely 
the {\it half-liberated orthogonal quantum group} $O_N \le O_N^* \le O_N^+$
\cite{BaSp}.  Moreover, it is known that the inclusion $O_N \le O_N^*$ is
maximal, meaning that there is no intermediate quantum group $O_N \le \G \le
O_N^*$.  See \cite{BaBiCoCu} for details.  It is also conjectured in
\cite{BaBiCoCu} that the inclusion $O_N^* \le O_N^+$ is maximal.  In this short
section we state an easy consequence of Theorem~\ref{theorem:N>3}, which can be regarded as shedding some light on this
conjecture. 

Let $k \ge 1$ and $E \subset \C^N$ a $k$-dimensional subspace.  Generalizing
the case $k=N-1$ discussed in Section \ref{section:mainresult}, we can
canonically associate to $E$ the quantum subgroup $O_{k,E}^+\le O_N^+$
(isomorphic to $O_k^+$ acting on $E$ via its fundamental representation and
acting trivially on $E^\perp$), as well as the corresponding subgroups
$O_{k,E}^*$, $O_{k,E}$.

\begin{theorem}
  If $\G \leq O_N^+$ is a quantum subgroup containing $O_N^*$ and $O_{3,E}^+$,
  then $\G = O_N^+$.
\end{theorem}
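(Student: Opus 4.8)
The plan is to prove the stronger inclusion $O_N^+ \le \G$; together with the hypothesis $\G \le O_N^+$ this forces $\G = O_N^+$. When $N=3$ the only three-dimensional subspace is $E = \C^N$, so $O_{3,E}^+ = O_N^+ \le \G$ and there is nothing to prove; I will therefore assume $N \ge 4$. The two tools I would combine are Theorem~\ref{theorem:N>3}(1) and the closure property recorded in Remark~\ref{remark:inner-faithfulness}, namely that a quantum subgroup which contains $\H_1$ and $\H_2$ must contain the topologically generated subgroup $\langle \H_1,\H_2 \rangle$. The starting observation is that $\G \ge O_N^* \ge O_N$, so $\G$ contains the classical orthogonal group $O_N$ and hence every classical ``corner'' subgroup $O_{k,F}$ obtained by letting $O(F)$ act on a (real) subspace $F$ and trivially on $F^\perp$.

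First I would fix a flag of subspaces $E = E_3 \subset E_4 \subset \cdots \subset E_N = \C^N$ with $\dim E_k = k$, and climb it by induction on $k$. The inductive input is a relativized form of Theorem~\ref{theorem:N>3}(1): inside $O_{k+1,E_{k+1}}^+ \cong O_{k+1}^+$ one has, for $k+1 \ge 4$,
\[
  O_{k+1,E_{k+1}}^+ = \langle\, O_{k+1,E_{k+1}},\ O_{k,E_k}^+ \,\rangle,
\]
where $O_{k+1,E_{k+1}}$ is the classical orthogonal group acting on $E_{k+1}$ and $O_{k,E_k}^+$ is the copy of $O_k^+$ acting on the hyperplane $E_k \subset E_{k+1}$; these correspond exactly to the classical group $O_{k+1}$ and a quantum stabilizer $O_k^{+,\xi}$ appearing in Theorem~\ref{theorem:N>3}(1). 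The base case $O_{3,E_3}^+ = O_{3,E}^+ \le \G$ is the hypothesis. For the inductive step, assuming $O_{k,E_k}^+ \le \G$, I note that $O_{k+1,E_{k+1}} \le O_N \le \G$ as well, so by Remark~\ref{remark:inner-faithfulness} together with the displayed identity we get $O_{k+1,E_{k+1}}^+ = \langle O_{k+1,E_{k+1}}, O_{k,E_k}^+\rangle \le \G$. Taking $k = N-1$ yields $O_N^+ = O_{N,\C^N}^+ \le \G$, which completes the argument.

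The step I expect to be the main obstacle is making the relativized identity rigorous, i.e.\ passing between ``generation inside $O_{k+1,E_{k+1}}^+$'' and ``generation inside $O_N^+$'', since the fixed spaces detecting topological generation are computed in $(\C^N)^{\otimes m}$ rather than in $E_{k+1}^{\otimes m}$. The cleanest way I see is to observe that topological generation is detected purely by intertwiner spaces (Definition~\ref{qsg}), and that since both generators $O_{k+1,E_{k+1}}$ and $O_{k,E_k}^+$ already sit inside $O_{k+1,E_{k+1}}^+$, the subgroup they generate is the same whether computed in $O_N^+$ or in $O_{k+1,E_{k+1}}^+$; inside the latter, which is abstractly $O_{k+1}^+$, it equals $O_{k+1,E_{k+1}}^+$ by Theorem~\ref{theorem:N>3}(1). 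Carrying this out requires only the routine identification of the corner subgroups with the classical group and stabilizer of Theorem~\ref{theorem:N>3}(1), together with the bookkeeping that the spectator legs lying in $E_{k+1}^\perp$ are fixed by all the groups involved (exactly as in the decomposition argument of Lemma~\ref{lem_fix_xi}). I would stress two features of this approach: only part (1) of Theorem~\ref{theorem:N>3} is used, and the hypothesis $\G \ge O_N^*$ enters solely through $O_N \le O_N^*$, which furnishes the classical orthogonal subgroups needed to advance the flag one dimension at a time.
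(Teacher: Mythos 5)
Your proposal is correct and takes essentially the same route as the paper: the paper's proof is precisely a "simple induction on $k = \dim E \geq 3$," showing $O_N^+$ is generated by $O_N$ and any corner subgroup $O_{k,E}^+$, driven by Theorem~\ref{theorem:N>3}(1) applied inside each copy $O_{k+1,E_{k+1}}^+ \cong O_{k+1}^+$ and the closure property of Remark~\ref{remark:inner-faithfulness}, with the hypothesis $O_N^* \le \G$ used only through $O_N \le O_N^*$. Your flag bookkeeping and the discussion of relativizing topological generation from $O_{k+1,E_{k+1}}^+$ up to $O_N^+$ simply spell out details the paper leaves implicit.
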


\begin{proof}
  From Theorem~\ref{theorem:N>3} it follows by a simple induction on $k = \dim E
  \geq 3$ that $O_N^+$ is generated by $O_N$ and any subgroup $O_{k,E}^+$. In
  particular it is generated by $O_N^*$ and $O_{3,E}^+$.
\end{proof}




\section*{Acknowledgments}

We 
would like to thank Teo Banica, Julien Bichon, Marius Junge and Reiji Tomatsu for enlightening
conversations.  M.B.'s research was partially supported by an NSERC
postdoctoral fellowship. B.C.'s research was partially supported by NSERC, ERA, Kakenhi and
ANR-14-CE25-0003 funding.

\bibliographystyle{plain} \bibliography{CEP4Ao(n)}

\end{document}